\title[Determinacy Separations for Class Games]{Determinacy Separations for Class Games}
\author{Sherwood Hachtman}
\address{Department of Mathematics, Statistics, and Computer Science\\
University of Illinois at Chicago\\
Chicago, IL 60613, USA}
\email{hachtma1@uic.edu}
\thanks{I am grateful to Victoria Gitman for introducing me to the questions discussed here.  I also thank the American Institute of Mathematics and organizers of the workshop ``High and Low Forcing'' held in January, 2016, which allowed these initial conversations to take place.}
\newtheorem{Theorem}{Theorem}[section]
\newtheorem{Lemma}[Theorem]{Lemma}
\newtheorem{Claim}[Theorem]{Claim}
\theoremstyle{definition}
\newtheorem{Definition}[Theorem]{Definition}
\newtheorem{Conjecture}[Theorem]{Conjecture}
\newtheorem{Remark}[Theorem]{Remark}
\renewcommand{\P}{\mathcal{P}}
\newcommand{\Coll}{\operatorname{Coll}}
\newcommand{\om}{\omega}
\newcommand{\la}{\langle}
\newcommand{\ra}{\rangle}
\newcommand{\C}{\mathcal{C}}
\newcommand{\clopdetR}{\Delta^\mathbb{R}_1\operatorname{-DET}}
\newcommand{\opdetR}{\Sigma^\mathbb{R}_1\operatorname{-DET}}
\newcommand{\clopdetX}{\Delta^X_1\operatorname{-DET}}
\newcommand{\opdetX}{\Sigma^X_1\operatorname{-DET}}
\newcommand{\clopdetk}{\Delta^\kappa_1\operatorname{-DET}}
\newcommand{\opdetk}{\Sigma^\kappa_1\operatorname{-DET}}
\newcommand{\ZF}{\mathsf{ZF}}
\newcommand{\ATR}{\mathsf{ATR}}
\newcommand{\ZFC}{\mathsf{ZFC}}
\newcommand{\KP}{\mathsf{KP}}
\newcommand{\wfo}{\operatorname{wfo}}
\newcommand{\ON}{\operatorname{ON}}
\newcommand{\plto}{\rightharpoonup}
\newcommand{\rst}{{\upharpoonright}}
\newcommand{\NBG}{\mathsf{NBG}}
\newcommand{\MK}{\mathsf{MK}}
\newcommand{\dom}{\operatorname{dom}}
\newcommand{\PCA}{\Pi^1_1\text{-}\mathsf{CA}_0}
\newcommand{\RCA}{\mathsf{RCA}^3_0}
\newcommand{\crit}{\operatorname{crit}}
\newcommand{\R}{\mathbb{R}}
\renewcommand{\epsilon}{\varepsilon}
\begin{document}

\begin{abstract}We show, assuming weak large cardinals, that in the context of games played in a proper class of moves, clopen determinacy is strictly weaker than open determinacy.  The proof amounts to an analysis of a certain level of $L$ that exists under large cardinal assumptions weaker than an inaccessible.  Our argument is sufficiently general to give a family of determinacy separation results applying in any setting where the universal class is sufficiently closed; e.g., in third, seventh, or $(\omega+2)$th order arithmetic.   We also prove bounds on the strength of Borel determinacy for proper class games.  These results answer questions of Gitman and Hamkins.\end{abstract}

\maketitle

\section{Introduction}  
One theme in the study of infinite games is their close connection to principles of transfinite recursion.  At the lowest level, this is embodied in Steel's seminal result \cite{St} that open and clopen determinacy for games on $\omega$ are both equivalent to the axiom $\ATR_0$ of arithmetic transfinite recursion.  In contrast, Schweber \cite{Schweber} has shown that in the third order setting, clopen determinacy for games with moves in $\R$ is equivalent to transfinite recursion along wellfounded relations on $\R$ (modulo some choice); but these principles do not imply open determinacy for games with moves in $\R$.  Recently we presented in \cite{HaRev} an alternate proof of Schweber's separation result using inner models in place of forcing.

The main result of this paper is a similar determinacy separation in the context of proper class games within some second or higher order set theory, such as von Neumann-Bernays-G\"{o}del set theory, $\NBG$.  Games of this kind were defined and investigated by Gitman and Hamkins \cite{GitHam}.  There they proved clopen determinacy is equivalent to a transfinite recursion principle allowing the iteration of first-order definitions along proper class wellorders; they conclude with a number of open questions, including that of whether open and clopen determinacy are equivalent for games with a proper class of moves.


We show here that a translation of our analysis in \cite{HaRev} can be employed to prove that clopen determinacy for class games does \emph{not} imply open determinacy for class games.  Indeed, our presentation uniformly separates open and clopen determinacy, not just for games on reals or on class trees in $\NBG$, but in many settings of typed higher order arithmetic, provided the next-to-largest type is sufficiently closed to allow coding of functions.  This answers Gitman and Hamkin's question in the negative, as well as reproving our generalization of Schweber's separation result to $n$-th order arithmetic, for all $n \geq 3$ (and indeed, to $(\alpha+2)$th order arithmetic for ordinals $\alpha$).

In the paper's final section, we discuss the strength of determinacy for class games with payoff in levels of the Borel hierarchy in the setting of Morse-Kelley set theory, $\MK$.  In particular, we show (again under mild large cardinals) Borel determinacy for proper class games is not provable in $\MK$.  This is a class games analogue of H.\ Friedman's famous result \cite{Fr} from the study of second order arithmetic.

We largely wish to avoid the formalisms of the typed set theories our results concern, and do so by focusing on a purely set-theoretic analysis of G\"{o}del's $L$ carried out in $\ZFC$.  The main prerequisite, then, is familiarity with constructibility and Condensation arguments; though it will be mentioned, the reader need not be familiar with Jensen's fine structure theory to understand this paper.

\section{Background and definitions}\label{sec:BasicDef}

\subsection{Games on trees}

Let $X$ be a set. $X^{<\om}$ denotes the set of finite sequences of elements of $X$; $X^Y$ denotes the set of functions $f:Y \to X$; hence $X^\om$ is the set of $\omega$-sequences of elements of $X$.  By \emph{tree on $X$}, we mean a subset $T \subseteq X^{<\om}$ closed under taking initial segments.  A tree is \emph{illfounded} if there exists an infinite branch through $T$, that is, an $x \in {}^\om X$ such that $\la x(0),\dots, x(n) \ra \in T$ for all $n$; otherwise $T$ is \emph{wellfounded}.  

We regard games as being played on trees.  If $T$ is a tree on $X$, two players, I and II, play a game on $T$ by alternating choosing elements of $X$, e.g.\
  \[
  \begin{array}{l|llllll}
  \text{I}  & x_0 &     & x_2 & \dots & x_{2n} &         \\
  \hline
  \text{II} &     & x_1 &     & \dots &      & x_{2n+1}
  \end{array},
  \]
subject to the rule that $\la x_0,\dots, x_i \ra \in T$ at all positions of the game.  The first player who disobeys this requirement (by being forced to make a move from a terminal node of $T$, for example) loses the game.

We understand a \emph{strategy} to be a special kind of function, with domain a subset of $T$ and codomain $X$, which instructs one of the two players how to move at all positions reachable according to the strategy at which it is that player's turn.  Formally, however, we here regard strategies as subtrees of $T$ of a special form, e.g.\ a strategy for Player I is a tree $S \subseteq T$ so that whenever $s \in S$ has even length, there is exactly one $x \in X$ so that $s^\frown\la x \ra \in S$; and if $s$ has odd length, then $s^{\frown}\la x \ra \in S$ iff $s^{\frown} \la x \ra \in T$, for all $x \in X$.  So construed, strategies in games on $T$ may be regarded as subsets of $T$; in the event that $X$ is closed under taking finite sequences, strategies in games on $X$ are themselves subsets of $X$.

\emph{Clopen determinacy for games on $X$}, denoted $\clopdetX$, is the assertion that whenever $T$ is a wellfounded tree on $X$, one of the players has a winning strategy in the game on $T$ (i.e., a strategy that never instructs that player to leave $T$).  \emph{Open determinacy for games on $X$}, denoted $\opdetX$, is the assertion that for all trees $T$ on $X$, either Player I (Open) has a strategy in $T$ that contains no infinite plays (equivalently, is a wellfounded subtree of $T$), or Player II (Closed) has a strategy in $T$ that avoids terminal nodes.


\subsection{Models that rank their wellfounded trees}

All of the separation results of this paper are witnessed in levels of G\"{o}del's constructible universe $L$.  We prefer here to work with Jensen's $J$-hierarchy (see \cite{Jensen}, \cite{ZS}), though standard remarks about the harmlessness of ignoring the difference between $J_\alpha$ and $L_\alpha$ apply; in particular, $J_\alpha = L_\alpha$ when $\om\cdot \alpha = \alpha$.  The main fine structural feature we need is \emph{acceptability}, namely, whenever $\rho \leq \alpha$ and there is a subset of $\rho$ in $J_{\alpha}\setminus J_{\alpha+1}$, there is a surjection $h:\rho \to J_\alpha$ in $J_{\alpha+1}$.  Note: We adopt the indexing of the $J_\alpha$'s whereby $\ON \cap J_\alpha = \omega \cdot \alpha$ for ordinals $\alpha$.

It is a theorem of $\ZFC$ that a tree $T$ is wellfounded if and only if there is a function $\rho:T \to \ON$ such that whenever $s,t \in T$ with $s \subsetneq t$, we have $\rho(s) > \rho(t)$.  For any wellfounded tree, there is a unique such function which takes the minimal possible values, that is, $\rho(s) = \sup_{s \subsetneq t \in T} (\rho(t)+1)$; we say this $\rho$ is the \emph{ranking function of $T$}, and assert the existence of a ranking function by saying \emph{$T$ is ranked}.  We remark in passing that the existence of ranking functions for wellfounded trees is \emph{not} provable in weak theories such as $\KP$ (though cf.\ Theorem~\ref{thm:noComp} and Remark~\ref{rem:Adm}, below).

\begin{Definition}\label{def:theta} Let $\psi(v)$ be a $\Pi_1$ formula in the language of set theory with one free variable.  We let $\theta^{\psi}$ be the least ordinal $\theta$ (if there is one) so that $J_\theta$ satisfies: ``There is a largest cardinal, $\kappa$, and:
  \begin{itemize}
    \item $\psi(\kappa)$ holds;
    \item $\kappa$ is regular and uncountable;
    \item Every wellfounded tree on $\kappa$ is ranked.''
  \end{itemize}
\end{Definition}
In our main application, $\psi(\kappa)$ will express ``$\kappa$ is inaccessible''.  Wherever possible, we drop $\psi$ from the notation for simplicity, referring simply to $\theta$.  We describe how varying $\psi$ obtains various separation results in \S\ref{sec:Frags}.

Note that by Condensation, $\theta$, if it exists, is countable.  Note also that since $\kappa$ is the largest cardinal of $J_\theta$,  any tree in $J_\theta$ may be regarded as a tree on $\kappa$ by taking its image under an appropriate bijection.  Furthermore, countable closure of $\kappa$ in $J_\theta$ guarantees that if $T \in J_\theta$ is an illfounded tree on $\kappa$, then there is an infinite branch through $T$ which belongs to $J_\kappa$; thus the wellfoundedness of trees in $J_\theta$ is $\Sigma_0$ in parameters.

\begin{Lemma}\label{lem:thetaordclosure} $\theta$ is closed under ordinal successor, addition, multiplication, and exponentiation.  In particular, $\omega \cdot \theta = \theta$, and so $L_\theta = J_\theta$.
\end{Lemma}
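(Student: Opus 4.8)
The plan is to reduce the lemma to a single closure property of $\lambda := \ON \cap J_\theta$, which by our indexing convention equals $\omega \cdot \theta$: namely, that $\lambda$ is closed under ordinal addition, multiplication, and exponentiation. Granting this for the moment, note first that closure under addition is exactly the assertion that $\lambda$ is additively indecomposable, so $\lambda = \omega^\mu$ for some $\mu$. Writing $\theta$ in Cantor normal form as $\omega^{\gamma_1} + \dots + \omega^{\gamma_j}$ with $\gamma_1 \geq \dots \geq \gamma_j$, left-distributivity of ordinal multiplication gives $\omega \cdot \theta = \omega^{1+\gamma_1} + \dots + \omega^{1+\gamma_j}$; but a finite sum of two or more powers of $\omega$ lies strictly between two consecutive powers of $\omega$, hence is not itself a power of $\omega$, so the equation $\omega \cdot \theta = \omega^\mu$ forces $j = 1$, i.e.\ $\theta = \omega^{\gamma_1}$. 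Since $J_\theta$ contains an uncountable cardinal, $\theta$ is far larger than $\omega^\omega$, whence $\gamma_1 \geq \omega$ and so $1 + \gamma_1 = \gamma_1$; thus $\omega \cdot \theta = \omega^{\gamma_1} = \theta = \lambda$. In particular $L_\theta = J_\theta$, by the standard fact that $J_\alpha = L_\alpha$ whenever $\omega \cdot \alpha = \alpha$. Finally, $\theta = \lambda$ is an uncountable limit ordinal closed under exponentiation, so $\omega^\delta < \theta$ for every $\delta < \theta$ and hence $\omega^\theta = \theta$: that is, $\theta$ is an $\varepsilon$-number, and $\varepsilon$-numbers are closed under ordinal successor, addition, multiplication, and exponentiation, which is the conclusion.

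It remains to establish the closure property of $\lambda$. Fix ordinals $\alpha, \beta < \lambda = \ON \cap J_\theta$, so that $\alpha, \beta \in J_\theta$; the idea is to exhibit each of $\alpha + \beta$, $\alpha \cdot \beta$, and $\alpha^\beta$ as the rank of a wellfounded tree belonging to $J_\theta$, and then to invoke the third clause of Definition~\ref{def:theta}. By the usual normal-form representations, each of these ordinals is the order type of an explicit wellordering definable from $\alpha$ and $\beta$: the concatenation order on a disjoint sum $\alpha \sqcup \beta$ for $\alpha + \beta$; the anti-lexicographic order on $\beta \times \alpha$ for $\alpha \cdot \beta$; and the anti-lexicographic order on the set of \emph{finite-support} functions $\beta \to \alpha$ for $\alpha^\beta$. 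The tree of strictly $<$-descending finite sequences through a wellordering $(A,<)$ is a wellfounded tree whose rank is the order type of $(A,<)$; and since $J_\theta$ is rudimentarily closed (in particular satisfies $\Delta_0$-separation) and contains $A^{<\omega}$ for each $A \in J_\theta$, both the wellordering and its tree of descending sequences lie in $J_\theta$. Moreover, each of the three wellorderings has underlying set of $J_\theta$-cardinality at most $\kappa$ --- this is where the finite-support restriction in the exponential case is essential. Since $\kappa$ is the largest cardinal of $J_\theta$, each of these trees can be pushed forward, along a bijection lying in $J_\theta$, to a wellfounded tree \emph{on $\kappa$} in $J_\theta$, of rank $\alpha + \beta$, $\alpha \cdot \beta$, and $\alpha^\beta$ respectively (cf.\ the remarks following Definition~\ref{def:theta}). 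By the third clause of that definition, $J_\theta$ contains a ranking function for each of these trees; since any ranking function for a wellfounded tree bounds its (absolute) rank from above, each of $\alpha + \beta$, $\alpha \cdot \beta$, $\alpha^\beta$ lies in $\ON \cap J_\theta = \lambda$, as required.

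The step I expect to require the most care is precisely the verification that these wellorderings --- and their trees of descending sequences --- genuinely are elements of $J_\theta$ of $J_\theta$-cardinality at most $\kappa$. The first point is a routine absoluteness/rudimentary-closure check: forming the wellorderings and the associated trees from $\alpha$ and $\beta$ is $\Delta_0$ over $J_\theta$, using closure of $J_\theta$ under $A \mapsto A^{<\omega}$ and $\Delta_0$-separation. The second point is the observation that requiring finite support in the normal-form representation of $\alpha^\beta$ is exactly what keeps the underlying set of $J_\theta$-cardinality at most $\kappa$ --- the full set of functions $\beta \to \alpha$ would in general be strictly larger in $J_\theta$ --- so that this tree, like the other two, transfers to a tree on $\kappa$. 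Once these are in hand, the remainder of the argument is the Cantor-normal-form computation of the first paragraph together with the indexing convention $\ON \cap J_\theta = \omega \cdot \theta$.
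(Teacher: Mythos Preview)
Your argument is correct and takes essentially the same approach as the paper: exhibit each target ordinal as the rank of a wellfounded tree on $\kappa$ in $J_\theta$, then invoke the ranking clause of Definition~\ref{def:theta} to place that ordinal below $\ON\cap J_\theta$. The paper does this directly for successor and addition (via descending-sequence trees pulled back along bijections $h_\alpha:\kappa\to\alpha$, then concatenated), leaving multiplication and exponentiation as an exercise and treating $\omega\cdot\theta=\theta$ as an immediate consequence; you instead build the standard wellorderings for all three operations uniformly and spell out the $\omega\cdot\theta=\theta$ step via Cantor normal form.

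Two small remarks. First, you call $\theta$ an ``uncountable limit ordinal,'' but $\theta$ is countable in $V$ by Condensation (as noted just after Definition~\ref{def:theta}); this is harmless, since your argument only uses that $\theta$ is a limit. Second, it is cleanest to transport each wellorder to one on $\kappa$ \emph{before} forming the tree of descending sequences, so that the tree arises as a $\Sigma_0$-definable subset of $\kappa^{<\omega}\in J_\theta$; forming the tree on the original carrier $A$ first requires knowing $A^{<\omega}\in J_\theta$, which is true but needs the same bijection-to-$\kappa$ trick anyway.
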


\begin{proof} Recall (\cite{ZS}, Lemma 1.5) that each $J_\alpha$ is a model of $\Sigma_0$-Comprehension; also, clearly $\kappa^{<\omega} \in J_{\kappa+1} \subseteq J_\theta$.  Suppose $\kappa < \alpha < \omega \cdot \theta$.  By definition of $J_\theta$ there is a bijection $h_\alpha : \kappa \to \alpha$ which belongs to $J_\theta$.  Define
  \[
    T_\alpha := \{ s \in \kappa^{<\om} \mid s(0) = 0 \text{ and } h_\alpha \circ s(i) > h_\alpha \circ s(j)\text{ for all }0 < i < j < |s| \}.
  \]
The tree $T_\alpha$ is clearly wellfounded, and has rank at least $\alpha+1$; hence $\alpha+1 < \omega \cdot \theta$.  

For ordinal addition, if $\alpha, \beta < \omega \cdot \theta$ then defining $T_\alpha, T_\beta$ as above, 
  \[
    T_{\alpha + \beta} = \{ s^{\frown} t \mid s \in T_\beta, t \in T_\alpha\}
  \]
is wellfounded with rank $\alpha + \beta$.

The remaining closure properties are proved similarly, defining trees with suitably large ranks; we leave these as an exercise.
\end{proof}
Although $L_\theta$ possesses some fairly useful closure properties, we now show that it fails to satisfy even weak fragments of Replacement.  In what follows, $\Sigma_n(X)$ formulas are $\Sigma_n$ formulas with parameters from $X$ (so $\Sigma_1(\{\kappa\})$ formulas have at most parameter $\kappa$).  Recall a transitive set $M$ is \emph{admissible} if $(M,{\in})$ is a model of Kripke-Platek set theory, $\KP$ (see \cite{Barwise}); levels $J_\alpha$ of $L$ are admissible if and only if they satisfy $\Sigma_1$-Replacement (see \cite{Jensen}, Lemma 2.11).  $\Delta_1$-Comprehension is a consequence of $\KP$ (\cite{Barwise}, Theorem 4.5).

\begin{Theorem}\label{thm:noComp} $L_\theta$ is not admissible.  Indeed, we have
  \begin{enumerate}
    \item There is a cofinal map $F: \omega \to \theta$ that is $\Sigma_1(\{\kappa\})$-definable over $L_\theta$;
    \item The $\Sigma_1(\{\kappa\})$-theory of $L_\theta$ is not an element of $L_\theta$;
    \item There is an $a \subseteq \omega$ that is $\Delta_1(\{\kappa\})$-definable over $L_\theta$, but not in $L_\theta$.
  \end{enumerate}
\end{Theorem}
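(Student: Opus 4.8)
The plan is to prove part (1) directly --- it is the engine --- and then to extract (2) and (3) from it together with a condensation argument, with non-admissibility following from either. Two preliminary bounds underlie everything. First, any wellfounded tree $T$ on $\kappa$ with $T\in L_\theta$ has $\operatorname{rank}(T)<\theta$: the defining property of $\theta$ places the ranking function $\rho_T$ into $L_\theta$, hence into some $J_\gamma$ with $\gamma<\theta$, so $\operatorname{ran}(\rho_T)\subseteq\om\cdot\gamma$ and $\operatorname{rank}(T)\le\om\cdot\gamma+1<\theta$ by Lemma~\ref{lem:thetaordclosure}. Second, for each fixed $\gamma<\theta$ the ordinal $\Lambda_\gamma:=\sup\{\operatorname{rank}(T):T\in J_\gamma \text{ a wellfounded tree on }\kappa\}$ is below $\theta$: since $\kappa$ is the largest cardinal of $L_\theta$, the wellfounded trees on $\kappa$ in $J_\gamma$ admit a surjective enumeration $e\colon\kappa\to\{T\in J_\gamma: T\text{ a wellfounded tree on }\kappa\}$ lying in $L_\theta$, and then $T^\ast:=\{\la\ra\}\cup\{\la i\ra{}^{\frown} t:i<\kappa,\ t\in e(i)\}$ is a wellfounded tree on $\kappa$ in $L_\theta$ with $\operatorname{rank}(T^\ast)\ge\Lambda_\gamma$; now apply the first bound.

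For (1): ``$\psi(\kappa)$'', ``$\kappa$ is regular'' and ``$\kappa$ is uncountable'' are $\Pi_1$, hence hold in every $J_\alpha$ with $\kappa<\alpha\le\theta$; so by minimality of $\theta$, for each such $\alpha<\theta$ at least one of ``$\kappa$ is the largest cardinal'' and ``every wellfounded tree on $\kappa$ is ranked'' fails in $J_\alpha$. Put $\alpha_0=\kappa+1$ and let $\alpha_{n+1}$ be the least $\gamma$ such that $J_\gamma$ ranks every wellfounded tree on $\kappa$ lying in $J_{\alpha_n}$ \emph{and} $J_\gamma$ contains a surjection of $\kappa$ onto each ordinal in $(\kappa,\om\cdot\alpha_n)$. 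Such a $\gamma$ exists below $\theta$ --- the ranking clause is met by stage $\Lambda_{\alpha_n}+\om$ by the preliminary bounds, the collapsing clause by the stage at which a bijection of $\om\cdot\alpha_n$ with $\kappa$ appears (below $\theta$, as $\kappa$ is the largest cardinal of $L_\theta$) --- and $\gamma>\alpha_n$, since otherwise $J_{\alpha_n}$ satisfies both clauses, hence the whole defining property, forcing $\alpha_n\ge\theta$. Each clause is $\Sigma_0$ in $\alpha_n,\gamma,\kappa$, using that wellfoundedness of a tree on $\kappa$ is $\Sigma_0$ (the remark after Definition~\ref{def:theta}) and that ``$\rho$ is a ranking function'' is $\Sigma_0$; hence the relation ``$\alpha=\alpha_n$'', witnessed by a finite sequence obeying the recursion, is $\Sigma_1(\{\kappa\})$ over $L_\theta$ and defines a total function $F$ on $\om$. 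Finally $\sup_n\alpha_n=\theta$: were $\bar\alpha:=\sup_n\alpha_n<\theta$, then $J_{\bar\alpha}$ would satisfy the defining property --- every wellfounded tree on $\kappa$ in $J_{\bar\alpha}$ lies in some $J_{\alpha_n}$, hence is ranked in $J_{\alpha_{n+1}}\subseteq J_{\bar\alpha}$; each $\mu\in(\kappa,\om\cdot\bar\alpha)$ is collapsed in some $J_{\alpha_{n+1}}$; and $\psi(\kappa)$, regularity, uncountability persist --- contradicting minimality. So $F\colon\om\to\theta$ is cofinal and $\Sigma_1(\{\kappa\})$-definable, which is (1).

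For (2) and (3): a condensation argument shows $L_\theta$ is the $\Sigma_1$-Skolem hull of $\{\kappa\}$. If $X\prec_{\Sigma_1}L_\theta$ with $\kappa\in X$ and $\pi\colon X\cong J_{\bar\theta}$ is the transitive collapse, then $J_{\bar\theta}$ satisfies the defining property of $\theta$ with $\pi(\kappa)$ in place of $\kappa$: the $\Pi_1$ clauses transfer downward, the ranking clause transfers since ``$T$ has a ranking function'' is $\Sigma_1$ in $T$, and --- the one delicate point --- ``$\kappa$ is the largest cardinal'' transfers because, by acceptability, ``$\lambda$ is not a cardinal'' is $\Sigma_1$ in $\lambda$ over $L_\theta$. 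By minimality $\bar\theta=\theta$, so $X=L_\theta$; thus the canonical $\Sigma_1$-Skolem function gives a $\Sigma_1(\{\kappa\})$-definable partial surjection $g\colon\om\to L_\theta$. Now the diagonal set $B:=\{n:g(n)\!\downarrow,\ g(n)\subseteq\om,\ n\notin g(n)\}$ is a $\Sigma_1(\{\kappa\})$-definable subset of $\om$, so $B=\{n:\ulcorner\sigma_n\urcorner\in\operatorname{Th}\}$ for a primitive recursive map $n\mapsto\ulcorner\sigma_n\urcorner$, where $\operatorname{Th}$ is the $\Sigma_1(\{\kappa\})$-theory of $L_\theta$; but $B\ne g(m)$ for any $m$ (else $m\in B\Leftrightarrow m\notin B$), so $B\notin L_\theta$, and therefore $\operatorname{Th}\notin L_\theta$ (else $B$ would be $\Sigma_0$-definable from $\operatorname{Th}$). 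This is (2). For (3), set $a:=\{\la n,\ulcorner\varphi\urcorner\ra:\varphi\text{ a }\Sigma_1\text{ formula in one free variable},\ J_{F(n)}\models\varphi(\kappa)\}$. The graph of $F$ is $\Delta_1(\{\kappa\})$ over $L_\theta$ ($F$ is a total function, so $(n,\alpha)\notin\operatorname{graph}(F)$ iff $\exists\alpha'\ne\alpha\,(n,\alpha')\in\operatorname{graph}(F)$), and $\Sigma_1$-satisfaction in $J_m$ is uniformly $\Delta_1$ in $m$ (here ``$\exists y\in J_m\,\psi_0$'' is a bounded quantifier, unlike ``$\exists y\,\psi_0$'' over all of $L_\theta$), so $a$ is $\Delta_1(\{\kappa\})$-definable over $L_\theta$. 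If $a\in L_\theta$ then so is $\bigcup_n\{\ulcorner\varphi\urcorner:\la n,\ulcorner\varphi\urcorner\ra\in a\}$, which --- as $F$ is cofinal and $\Sigma_1$ formulas persist upward --- equals $\operatorname{Th}$, contradicting (2); so $a\notin L_\theta$, which is (3). Finally $L_\theta$ is not admissible: $\KP$ proves $\Delta_1$-comprehension, which would put $a$ into $L_\theta$ --- equivalently, $\KP$ proves $\Sigma_1$-collection, which would bound $\operatorname{ran}(F)$ below $\theta$.

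The main obstacle is twofold. In (1) it is the two preliminary bounds --- that a ranking function lying in $L_\theta$ must appear at a bounded stage, and the disjoint-union-of-trees trick bounding $\Lambda_\gamma$ --- together with the verification that the successor clause genuinely pushes past $\alpha_n$; the $\Sigma_0$-complexity bookkeeping for the recursion is routine given the $\Sigma_0$-ness of wellfoundedness recorded in the text. In (2)--(3) it is the condensation argument, whose only delicate ingredient is certifying that ``$\kappa$ is the largest cardinal of $L_\theta$'' survives $\Sigma_1$-elementarity; once $L_\theta$ is identified as the $\Sigma_1$-Skolem hull of $\{\kappa\}$ (so that its $\Sigma_1$ projectum is $\om$), the remaining steps are standard.
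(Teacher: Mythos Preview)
Your argument is correct and follows essentially the same route as the paper's: define a cofinal $F$ by iterating ``rank all the wellfounded trees seen so far'', invoke minimality of $\theta$ to get cofinality, then collapse the $\Sigma_1$-Skolem hull of $\{\kappa\}$ and identify it with $L_\theta$; your extra collapsing clause in the recursion and your diagonal argument for (2) (in place of the paper's appeal to acceptability) are harmless cosmetic variations.

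One small slip: the claim that ``the ranking clause is met by stage $\Lambda_{\alpha_n}+\omega$'' is not what your preliminary bounds give --- they only show $\Lambda_{\alpha_n}<\theta$, not where the ranking functions themselves land. What you actually need (and what your $T^\ast$ argument does establish) is merely that \emph{some} stage below $\theta$ suffices: the ranking function of $T^\ast$ lies in some $J_\gamma$ with $\gamma<\theta$, and each individual $\rho_T$ is $\Sigma_0$-definable from it. (Alternatively, mimicking the recursion in the clopen-determinacy proof gives the explicit bound $\alpha_n+\Lambda_{\alpha_n}+1$.) Since only $\alpha_{n+1}<\theta$ is used downstream, this does not affect the validity of your proof.
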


\begin{proof}
Let $F(0) = \kappa$, and inductively let $F(n+1)$ be the least $\alpha$ such that every wellfounded tree $T$ on $\kappa$ belonging to $J_{F(n)+1}$ has a rank function belonging to $J_\alpha$.  Note that this $F$ is well-defined: Given $\beta$ with $\kappa < \beta < \theta$, the set $W = \{T \in J_\beta \mid T$ is wellfounded$\}$ is an element of $J_{\theta}$ by $\Sigma_0$-Comprehension, and since $W$ has size $\kappa$ we may regard the direct sum of these, 
  \[
  \bigoplus_{T \in W} T = \{ \la T \ra^\frown s \mid s \in T \in W\},
  \]
 as a tree on $\kappa$; this is wellfounded, and by the definition of $L_\theta$, must be ranked in $L_\theta$, hence in some $J_\alpha$ with $\alpha<\theta$.

The map $F$ is easily seen to be increasing, and is defined via a $\Sigma_1(\{\kappa\})$-recursion over $L_\theta$, so is itself $\Delta_1(\{\kappa\})$-definable.

Now let $\lambda = \sup F" \omega$.  Then $J_\lambda$ satisfies that every wellfounded tree on $\kappa$ is ranked, and so we must have $\lambda = \theta$, by the minimality condition in the definition of $\theta$.  This proves (1); since there is a $\Sigma_1$-definable cofinal map $F: \om \to \theta$, we have that $L_\theta$ is not admissible.

Now if $H$ is the $\Sigma_1$ Skolem hull in $L_\theta$ of $\{\kappa\}$, we have that $H$ must contain the range of $F$, and in particular $\ON \cap H$ is unbounded in $\theta$; and $\kappa$ clearly is a regular cardinal in $H$, is the largest cardinal there, and $H \models \psi(\kappa)$.  Since $H \cong J_\alpha$ for some $\alpha \leq \theta$ by Condensation, we must have $\alpha = \theta$ by the minimality in our definition of $\theta$.  It follows that $H = L_\theta$; that is, the $\Sigma_1$-projectum of $L_\theta$ in parameter $\{\kappa\}$ is $\omega$.  In particular, the $\Sigma_1(\{\kappa\})$ theory of $L_\theta$ does not belong to $L_\theta$ (e.g.\ by acceptability, if this theory did belong to $L_\theta$, then it would belong to $L_\kappa$, and taking a transitive collapse there of the atomic diagram coded by the theory would yield the contradiction $L_\theta \in L_\kappa$).  This shows (2).

(3) now follows from (1) and (2): Fix an enumeration $\la \varphi_i(v) \ra_{i \in \om}$ of $\Sigma_1$ formulae with one free variable.  Since $F$ is $\Delta_1(\{\kappa\})$-definable, so is the set
 \[
   a = \{ 2^i 3^n \mid J_{F(n)} \models \varphi_i(\kappa) \}.
 \] 
The $\Sigma_1(\{\kappa\})$-theory of $L_\theta$ is $\Sigma^0_1(a)$; hence $a \notin L_\theta$, proving (3).
\end{proof}

\section{Determinacy separations in the $L_\theta$'s}\label{sec:ClopSep}

In this section, we fix a suitable $\Pi_1$ formula $\psi(u)$ and suppose $\theta = \theta^{\psi}$ and $\kappa$ are as in Definition~\ref{def:theta}.
\begin{Theorem}\label{thm:ClopdetinTheta} $L_\theta$ is a model of $\clopdetk$.
\end{Theorem}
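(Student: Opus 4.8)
The plan is to show that in $L_\theta$, every wellfounded tree $T$ on $\kappa$ — equivalently, every clopen game on $\kappa$, since $\kappa$ is the largest cardinal of $J_\theta$ so all moves can be coded in $\kappa$ — is determined. The classical fact (provable in ZFC, hence true in $L_\theta$) is that a \emph{ranked} wellfounded game tree is determined: one defines, by recursion on the ranking function $\rho$, the set of positions from which a fixed player has a winning strategy, and then reads off a winning strategy for whichever player wins at the empty position. The key observation is that by the very definition of $\theta$, $J_\theta$ satisfies ``every wellfounded tree on $\kappa$ is ranked,'' so this machinery is available \emph{inside} $L_\theta$.

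The key steps, in order. First, fix $T \in L_\theta$ a wellfounded tree on $\kappa$. Since $J_\theta$ models that every such $T$ is ranked, there is a ranking function $\rho : T \to \ON$ with $\rho \in J_\theta$; its range is bounded below $\theta$, so $\rho$ is genuinely a set in $L_\theta$. Second, argue inside $L_\theta$ that from $\rho$ one can, by $\Sigma_0$-Comprehension and $\in$-recursion on $\rho$ (both available: recall from the proof of Lemma~\ref{lem:thetaordclosure} that each $J_\alpha$ models $\Sigma_0$-Comprehension, and recursion along a \emph{set} wellfounded relation is unproblematic here since the recursion has bounded rank), build the ``winning region'' $W \subseteq T$: put a terminal node into $W$ iff it is a loss for the player-to-move's opponent's... — more precisely, $s \in W$ iff the player whose turn it is at $s$ has a winning strategy in the subgame below $s$, defined by: if $s$ is terminal, the mover loses; if $|s|$ even (Player I to move), $s$ is a win for I iff some immediate successor is; and symmetrically for odd $|s|$. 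This is a legitimate recursion on $\rho(s)$. Third, the winning strategy itself: whichever player wins at $\emptyset$ defines a strategy $S$ by always moving to a successor node that preserves the winning-region membership; $S$ is a subtree of $T$, hence an element of $L_\theta$ (again by $\Sigma_0$-Comprehension applied to a bounded set), and it is winning because along any play consistent with $S$ the ranking function strictly decreases, so the play is finite and ends with the opponent stuck — exactly the condition defining a winning strategy in a clopen game.

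The main obstacle — and it is more a matter of care than of genuine difficulty — is verifying that all of the above goes through in $L_\theta$ despite $L_\theta$'s failure of Replacement (Theorem~\ref{thm:noComp}). The point to stress is that the construction never requires collecting an unbounded family of objects: the ranking function $\rho$ is handed to us with bounded range, the recursion defining $W$ proceeds along the \emph{set} relation $\{(s,t) : s,t \in T,\ \rho(t) < \rho(s)\}$ rather than along $\ON$, and both $W$ and $S$ are subsets of $T$ obtained by separating out elements satisfying a $\Sigma_0$ (in fact $\Delta_1$) condition using parameters $T, \rho$. Since $T, \rho \in J_\beta$ for some $\beta < \theta$, all the relevant objects already live in $J_{\beta+1} \subseteq L_\theta$, so $\Sigma_0$-Comprehension in the $J$-hierarchy suffices. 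One should also note in passing that, as observed after Definition~\ref{def:theta}, wellfoundedness of trees in $J_\theta$ is absolute (being $\Sigma_0$ in parameters, using countable closure of $\kappa$), so there is no discrepancy between ``$T$ is wellfounded'' as computed in $L_\theta$ and in $V$; hence $\clopdetk$ as interpreted in $L_\theta$ really does quantify over the trees we have handled.
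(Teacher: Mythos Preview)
Your overall strategy is the same as the paper's: use the ranking function $\rho$ to run a backward induction computing which player wins at each node, then read off a winning strategy. Where your proposal has a genuine gap is in the justification that this recursion stays inside $L_\theta$.

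You assert that the winning region $W$ and the strategy $S$ are ``subsets of $T$ obtained by separating out elements satisfying a $\Sigma_0$ (in fact $\Delta_1$) condition using parameters $T,\rho$,'' and hence that ``all the relevant objects already live in $J_{\beta+1}$'' once $T,\rho \in J_\beta$. This is not right. The membership condition $s \in W$ is not $\Sigma_0$ in $T,\rho$: it is defined by a transfinite recursion of length $\mu = \rho(\langle\,\rangle)$, and unwinding the recursion at a node $s$ requires quantifying over labelings of the entire subtree $T_s$, which are subsets of $T$ not bounded by any set you have in hand. Appealing to $\Delta_1$-Comprehension does not help either, since $L_\theta$ is inadmissible and Theorem~\ref{thm:noComp}(3) exhibits an outright failure of $\Delta_1$-Comprehension there. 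Nor does saying ``the recursion has bounded rank'' settle the matter: bounded rank tells you the recursion \emph{terminates}, not that its stages are uniformly available in some fixed $J_{\beta+1}$.

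The paper handles exactly this point with care. It defines partial labelings $\sigma_\alpha$ (your $W$ restricted to nodes of rank $<\alpha$) and shows by induction on $\alpha \le \mu$ that $\sigma_\alpha \in J_{\beta+\alpha+1}$: one level of the $J$-hierarchy per step of the rank recursion. The full labeling then lives in $J_{\beta+\mu+1}$, and the crucial use of Lemma~\ref{lem:thetaordclosure} is its closure of $\theta$ under ordinal \emph{addition}, giving $\beta+\mu+1 < \theta$. You cite that lemma only for $\Sigma_0$-Comprehension, but its real role in this proof is the ordinal-arithmetic closure that lets the $\mu$-step recursion finish below $\theta$. Supplying that bookkeeping (partial labelings climbing the $J$-hierarchy, plus $\beta+\mu+1<\theta$) fixes your argument and makes it coincide with the paper's.
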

The proof is the usual transfinite inductive construction; the point is the induction only needs to go up to the rank of the tree $T$, and $L_\theta$ is sufficiently closed to carry this out.
\begin{proof}[Proof of Theorem~\ref{thm:ClopdetinTheta}]
  Fix a tree $T \in L_\theta$ on $\kappa$ that is clopen.  There is a rank function $\rho$ for $T$ in $L_\theta$, so suppose $\rho(\la \ra) = \mu < \theta$.  By induction on the rank of nodes in $T$, we define a map $\sigma : T \to \{0,1\}$, required to satisfy
   \[
     \sigma(s) = 0 \iff (\exists \xi \in \kappa) s^{\frown} \la \xi \ra \in T \text{ and }\sigma(s^{\frown} \la \xi \ra) = 1.
   \]
So in particular, $\sigma(s) = 0$ for all terminal nodes of $T$, and $\sigma(s)$ is uniquely defined whenever it has been defined on all extensions of $s$ in $T$; so the map $\sigma$ is uniquely determined.  This $\sigma$ easily defines a winning strategy for the player (``Us'') whose turn it is at $s$, whenever $\sigma(s)=0$: Let Us play the least $\xi$ so that $\sigma(s^{\frown}\la \xi \ra) = 1$.  Then the opponent (``Them'') will only be able to play to nodes assigned $0$, and Us's favorable position is preserved; this continues until Us makes a move to a terminal node, at which point Them loses.

Since a winning strategy in $T$ is $\Sigma_0$-definable from $\{T, \sigma,\kappa\}$, it will be enough to show $\sigma \in L_\theta$.  Suppose $T \in J_\beta$, where $\beta < \theta$.  

Let us say a partial map $\tau : T \plto \{0,1\}$ is a \emph{partial winning strategy} if 
  \begin{itemize}
    \item $\tau(s) = 0 \to (\exists \xi < \kappa) \tau(s^\frown \la \xi \ra) = 1$;
    \item $\tau(s) = 1 \to (\forall \xi < \kappa) s^\frown\la \xi \ra \in T \to \tau(s^\frown \la \xi \ra) = 0$. 
  \end{itemize}
In particular, if $\tau(s)=1$ then $\tau$ is defined at all one-step extensions of $s$ in $T$.

Notice that by wellfoundedness of $T$, any two partial winning strategies must agree on the intersection of their domains.  We argue by induction on $\alpha \leq \mu$ that there is a partial winning strategy $\sigma_\alpha \in J_{\beta + \alpha + 1}$ so that $\{s \in T \mid \rho(s) < \alpha\} \subseteq \dom(\sigma_\alpha)$; in particular, $\sigma = \sigma_\mu \in J_{\beta+ \mu + 1} \subset L_\theta$ (as $\beta + \mu + 1 < \theta$ by Lemma~\ref{lem:thetaordclosure}), as needed.

The claim is clear for $\sigma_0 = \emptyset$.  Given $\sigma_\alpha$, we may let $\sigma_{\alpha+1}(s)$ be defined iff for all $\xi$ with $s^{\frown}\la \xi \ra \in T$, $\sigma_\alpha(s^\frown \la \xi \ra)$ is defined; and then set $\sigma_{\alpha+1}(s) = 0$ iff for some such $\xi$, $\sigma_{\alpha}(s^\frown\la \xi\ra) = 1$.  This is clearly definable from $\sigma_\alpha$, and inductively, takes values on all nodes $s$ with $\rho(s) \leq \alpha$, so takes care of the successor case.  

So let $\lambda \leq \mu$ be limit and suppose we have the claim for $\alpha< \lambda$.  Put
  \[
    \sigma_\lambda = \bigcup \{ \tau \in J_{\beta + \lambda} \mid \tau\text{ is a partial winning strategy}\}.
  \]
It is easy to see that $\sigma_\lambda$, so defined, is a partial winning strategy, and by inductive hypothesis, takes values at all nodes $s$ with $\rho(s) < \lambda$.  It is definable over $J_{\beta + \lambda}$, so belongs to $J_{\beta + \lambda + 1}$.  This completes the proof.
\end{proof}

The next theorem is the heart of the paper.

\begin{Theorem}\label{thm:NoOpen} $L_\theta$ is not a model of $\opdetk$.
\end{Theorem}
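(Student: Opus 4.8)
The plan is to exhibit an open game on $\kappa$ in $L_\theta$ that is \emph{not} determined there, and the natural candidate is a game that, if determined, would let $L_\theta$ compute something it provably cannot — namely (by Theorem~\ref{thm:noComp}) a cofinal map $\omega \to \theta$, or equivalently the $\Sigma_1(\{\kappa\})$-theory of $L_\theta$, or the real $a$. First I would set up the game so that one player is trying to describe (a code for) a countable well-order longer than any ordinal appearing in some proposed ranking, forcing an illfounded ``comparison'' tree, so that the open player wins exactly when the other side's proposed ranking data is bogus. Concretely: Player II (Closed) undertakes to play, move by move, an honest run through $L_\theta$'s attempt to rank a particular wellfounded tree $U$ on $\kappa$ — the ``direct sum of all wellfounded trees in $J_\beta$'' type object from the proof of Theorem~\ref{thm:noComp}, or iterates thereof — while Player I (Open) gets to challenge any claimed rank value by playing a descending sequence; since all the relevant trees really are wellfounded (this is an internal fact available to $L_\theta$, as wellfoundedness of trees on $\kappa$ in $L_\theta$ is $\Sigma_0$), Player I can never actually produce an infinite descent along a genuine rank, so the game is open (Player I wins only at a finite stage, by catching an inconsistency).

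The key steps, in order: \textbf{(i)} Fix a $\Sigma_1(\{\kappa\})$ recursion-theoretic description of the map $F$ from Theorem~\ref{thm:noComp}, and package the statement ``$F$ is total and cofinal in the ordinals'' as the payoff condition of a game $G$ on $\kappa$. \textbf{(ii)} Arrange $G$ so that Player II is committing, at stage $n$, to a well-order $W_n$ of $\kappa$ of length $\geq F(n)$ together with a witness (a ranking of the relevant direct-sum tree), and Player I is allowed at any point to play a descending $\kappa$-sequence through one of the claimed rankings or to point to a violation of the recursion clause defining $F$; Player I wins iff and when such a finite violation is exhibited. \textbf{(iii)} Verify $G$ is open for Player I: the losing condition for II is a well-founded (indeed, clopen in the ``caught cheating'' coordinate) event, because any genuine attempt to cheat in a way that survives forever would, pulled back inside $L_\theta$, produce an illfounded wellfounded tree — contradiction; so the set of plays won by I is open. \textbf{(iv)} Show Player I has no winning strategy in $L_\theta$: if $\tau_{\mathrm I} \in L_\theta$ were one, then Player II could honestly play the true $F$-recursion (which exists as a $\Sigma_1(\{\kappa\})$ class over $L_\theta$ but whose relevant initial segments are \emph{set}-sized and lie in $L_\theta$), defeating $\tau_{\mathrm I}$ — here one uses that each $F(n)$, and a ranking witnessing it, genuinely exists below $\theta$, by the argument of Theorem~\ref{thm:noComp}(1). \textbf{(v)} Show Player II has no winning strategy in $L_\theta$: a winning $\tau_{\mathrm{II}} \in L_\theta$ would, by playing it against all of Player I's challenges, allow one to read off from $\tau_{\mathrm{II}}$ (uniformly, $\Delta_1$ in $\{\kappa, \tau_{\mathrm{II}}\}$) a cofinal sequence of correct values of $F$, hence a cofinal map $\omega \to \theta$ \emph{as a set in $L_\theta$} — contradicting Theorem~\ref{thm:noComp}(1), which says $L_\theta$ is not admissible precisely because no such set exists. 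Combining (iv) and (v), $G$ is an open game on $\kappa$, $G \in L_\theta$, and $G$ is not determined in $L_\theta$, so $L_\theta \not\models \opdetk$.

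The main obstacle I expect is step \textbf{(v)} — extracting an \emph{element} of $L_\theta$ coding cofinality $\omega$ in $\theta$ from a merely-winning strategy for Closed. A winning strategy only guarantees survival, not that it plays the \emph{correct} rankings; Player II could stall by playing larger-than-necessary well-orders, or by playing non-canonical codes, so one must design the rules of $G$ so that surviving Player I's challenges forces Player II's $n$-th commitment to actually \emph{dominate} $F(n)$ (it is fine, even helpful, if it overshoots), and then argue that $\sup_n (\text{length of } W_n^{\tau_{\mathrm{II}}}) = \theta$ because it is both $\leq \theta$ (everything is in $L_\theta$) and $\geq \sup_n F(n) = \theta$. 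Getting the openness in step \textbf{(iii)} to genuinely be \emph{open} and not merely $\mathbf{\Sigma^0_2}$ is the other delicate point: Player I must be required to exhibit any alleged infinite descent or recursion-failure at a \emph{finite} stage (e.g., by having the descent be witnessed against a \emph{previously committed} ranking, which is a set Player II already put on the board, so an actual infinite branch through a claimed-wellfounded tree would already be an infinite play in which I never ``wins'' — hence that play goes to II, keeping I's winning set open). A secondary technical nuisance, easily handled, is coding: since $\kappa$ is the largest cardinal of $L_\theta$ and $\kappa^{<\omega} \in L_\theta$ with $\kappa$ regular and uncountable, all the finite tuples, well-orders of $\kappa$, and strategy-trees needed as moves can be coded as single elements of $\kappa$, so $G$ is legitimately a game \emph{on $\kappa$}.
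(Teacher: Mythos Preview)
Your outline aims at the right target --- a game whose Closed-winning strategies must compute the $\Sigma_1(\{\kappa\})$-theory of $L_\theta$ --- but the mechanism has a genuine gap, and it lies exactly where you flag it. First, the coding remark at the end is false: a well-order of $\kappa$ of order-type $\geq \kappa$, or a ranking function for one of the direct-sum trees, is a subset of $L_\kappa$ of size $\kappa$, hence lies in $L_\theta \setminus L_\kappa$; it cannot be played as a single move in a game on $\kappa$. If instead Closed reveals $W_n$ and the rankings bit by bit in response to Open's queries, then the only constraint an \emph{open} payoff can impose is that Closed never be caught in a \emph{finite} inconsistency --- and finite consistency forces neither wellfoundedness nor large order-type. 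Closed can simply commit to a copy of $\mathbb{Q}$ and answer every ``give me a smaller rank'' challenge forever; no finite violation ever arises, so Closed survives, yet nothing cofinal in $\theta$ can be read off. Your own worry about (iii) is exactly this tension: if Open's win is allowed to register an \emph{infinite} descent through some $W_n$, the payoff becomes $\Sigma^0_2$, not open; but if infinite descents count as wins for Closed, then (v) collapses. No rule decidable at a finite stage can express ``$\operatorname{otp}(W_n) \geq F(n)$.''

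The paper's game sidesteps this by never asking Closed to hand over size-$\kappa$ objects. Closed plays only a truth value $j$, ordinals $\alpha_n < \kappa$, and $\Sigma_0$-preserving maps $\pi_n : J_{\alpha_n} \to J_{\alpha_{n+1}}$ --- all genuinely elements of $L_\kappa$ --- subject to rules checkable at each finite stage (in particular, that every tree in $J_{\alpha_n}$ becomes ranked or illfounded after applying $\pi_n$). Open plays arbitrary $x_n \in L_\kappa$ which Closed must absorb into the next model. The extraction is then carried out \emph{in $V$}, not inside $L_\theta$: have Open enumerate all of the (countable in $V$) set $L_\kappa$, form the direct limit of the $J_{\alpha_n}$ along the $\pi_n$, and argue via a sequence of claims that this limit is wellfounded and equal to $L_\theta$; hence the truth value $j$ that Closed committed to at move one is the correct truth value of $\varphi_i(\kappa)$ in $L_\theta$. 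The idea missing from your plan is precisely this passage through a directed system of \emph{small} approximations whose direct limit is forced, by Open's enumeration, to be $L_\theta$.
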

\begin{proof} Note that if $L_\theta$ satisfies the determinacy of the open game played on a tree $T$, then the witnessing strategy being winning is upwards absolute to $V$ (recall $T \subseteq \kappa^{<\om}$ is a countable tree in $V$): A winning strategy for Open is a tree with no infinite branches, and since $L_\theta$ ranks its wellfounded trees, such a strategy must really have no infinite branch in $V$; on the other hand, a winning strategy for Closed is just a strategy with no terminal nodes, and this is clearly absolute.

So it will be sufficient to exhibit a game tree $T$ on $\kappa$---equivalently, on $L_\kappa$---which is winning for Closed in $V$, but for which no true winning strategy can belong to $L_\theta$.  The game we define is one in which any winning strategy for Closed must compute the $\Sigma_1(\{\kappa\})$ theory of $L_\theta$; by (2) of Theorem~\ref{thm:noComp}, there can be no winning strategy for Closed in $L_\theta$.

Let us describe in outline the game and the proof that it has the desired properties.  In the first round, Open chooses a natural number, $i$, corresponding to the index of some $\Sigma_1(\{\kappa\})$ formula, $\varphi_i(\kappa)$, in our fixed enumeration.  Closed must choose a truth value $j \in \{0, 1\}$ for this $\varphi_i(\kappa)$.  In the remaining $\omega$ moves of the game, Closed must plays ordinals $\alpha_0, \alpha_1, \dots$ corresponding to levels $J_{\alpha_n}$ that in some sense resemble $L_\theta$ while agreeing with the value $j$ played for $\varphi_i(\kappa)$; meanwhile, Open is allowed to play arbitrary sets $x_0,x_1,\dots$ from our universe of sets $L_\kappa$, which Closed is obligated to capture by the models $L_{\alpha_k}$.  Closed must also ensure these levels cohere, by providing $\Sigma_0$-preserving embeddings $\pi_{i,{i+1}} : L_{\alpha_i} \to L_{\alpha_{i+1}}$.

Now, Closed can win the game in $V$, simply by always playing the correct truth value of $\varphi_i(\kappa)$ in $L_\theta$, then by playing $\alpha_i$ corresponding to transitive collapses of sufficiently large hulls of initial segments of $L_\theta$.  But any true winning strategy in this game must play the correct truth values of $\varphi_i(\kappa)$: Using the countability of $L_\kappa$ in $V$, we can, playing as Open, list \emph{all} sets in $L_\kappa$.  If Closed lasts infinitely many plays, then we may take the direct limit along the embeddings $\la \pi_n \ra_{n\in\om}$; we argue that this direct limit must be isomorphic to $L_\theta$, so that the truth value $j$ asserted by Closed must have agreed with the real truth value of $\varphi_i(\kappa)$ in $L_\theta$.

We proceed to the formal argument.  Let the game tree $T$ consist of finite plays of the following form:
\[\begin{array}{l|ccccccc}
 \text{Open}     & i \in \om &                    & x_0 &            & x_1 & & \dots\\
 \hline 
 \text{Closed}      &   & j \in \{0,1\}, \alpha_0 &     & \pi_0, \alpha_1 &     & \pi_1, \alpha_2 & \dots
\end{array} 
 \; \; x_n, \alpha_n, \pi_n \in L_\kappa 
\]
The Closed player must maintain the following conditions, for all $n\in \om$: 
  \begin{itemize}
    \item Each $J_{\alpha_n}$ satisfies: ``there is a largest cardinal $\kappa_n$, which is regular, and $\psi(\kappa_n)$ holds'';
    \item $J_{\alpha_n} \models \varphi_i(\kappa_n)$ iff $j = 0$;
    \item $x_n \in J_{\kappa_{n+1}}$, for all $n$;
    \item $\pi_{n}: J_{\alpha_n} \to J_{\alpha_{n+1}}$ is $\Sigma_0$-preserving, $\crit(\pi_n) = \kappa_n$, and $\pi_{n}(\kappa_n) = \kappa_{n+1}$;
    \item For any tree $S \in J_{\alpha_n}$ on $\kappa_n$, $\pi_{n}(S)$ is either ranked or illfounded in $J_{\alpha_{n+1}}$
  \end{itemize}
(cf.\ Definition~\ref{def:theta}).  The game tree $T$ is easily seen to be definable over $L_{\kappa}$, so belongs to $L_\theta$.

\begin{Lemma}\label{lem:Closedwins} Closed has (in $V$) a winning strategy for the game on $T$.
\end{Lemma}
\begin{proof}[Proof of Lemma~\ref{lem:Closedwins}]
We describe how Closed ought to play to win the game in $T$.  For notational convenience, fix a sequence $\la i, x_0, x_1, \dots\ra$ potentially played by Open; the replies by Closed will always depend only on the moves made so far.  Let $j=0$ iff $L_\theta \models \varphi_i(\kappa)$, and fix $\tau_0 < \theta$ sufficiently large that $J_{\tau_0} \models \varphi_i(\kappa)$ iff $j=0$; since $\varphi_i$ is $\Sigma_1$, such an ordinal exists.

Having fixed $\tau_0$, let $\la \tau_n \ra_{n\in\om}$ be an increasing sequence of ordinals cofinal in $\theta$ so that for all $n$, every tree on $\kappa$ in $L_{\tau_n}$ that is wellfounded is ranked in $L_{\tau_{n+1}}$; note that such a sequence exists, by the same argument that the $F$ in (1) of Theorem~\ref{thm:noComp} is well-defined.

We now define an increasing $\om$-sequence of sets $H_0 \subseteq H_1 \subseteq H_2 \subseteq \dots \subset L_\theta$, with each $H_n \in L_\theta$, by induction.  Let $H_{-1} =\emptyset$, and for each $n<\om$, let $H_{n}$ satisfy
 \begin{itemize}
   \item $|H_n| < \kappa$ in $L_\theta$,
   \item $H_{n} \prec J_{\tau_n}$,
   \item $H_{n-1} \cup \{x_0, \dots, x_{n-1}\} \subseteq H_{n}$,
   \item $H_{n} \cap \kappa \in \kappa$.
 \end{itemize}
The fact that $\kappa$ is regular and uncountable inside $L_\theta$ allows us to obtain each $H_n$ in the standard way, by interleaving Skolem hulls and transitive closures below $\kappa$ for $\omega$-many steps; and then each $H_n$, being inductively defined from finitely elements of $L_\theta$, is in $L_\theta$.  Note that the third point entails $H_n \cap L_\kappa = L_{\kappa_n}$ for some $\kappa_n < \kappa$.

Now for $n < \om$, let $\alpha_n$ be the unique ordinal so that $J_{\alpha_n} \cong H_n$, which exists by Condensation.  Furthermore, the fact that $|H_n|<\kappa$ in $L_\theta$ guarantees $\alpha_n < \kappa$; and letting $e_n: J_{\alpha_n} \to H_n \prec J_{\tau_n}$ be the anticollapse embedding, we have $\crit(e_n) = \kappa_n$.  We then set, for $n < \om$,
  \[
    \pi_n = e_{n+1}^{-1} \circ e_n.
  \]
It is now easily verified that $\la \alpha_n, \pi_n \ra_{n\in\om}$ satisfies the requirements of the game on $T$ for Closed; $\Sigma_0$-preservation of the maps $\pi_n$ is immediate from elementarity of the embeddings and the fact that for all $n$, $J_{\tau_n}$ and $J_{\tau_{n+1}}$, hence $H_n$ and $H_{n+1}$, have the same $\Sigma_0(H_n)$-theory.  
\end{proof}
\begin{Lemma}\label{lem:truththeta} Suppose $\sigma$ is a winning strategy for Closed in $T$.  Then for all $i$, $\sigma(\la i \ra) = 0$ iff $L_\theta \models \varphi_i(\kappa)$.
\end{Lemma} 
\begin{proof}[Proof of Lemma~\ref{lem:truththeta}]
  Let $\sigma$ be winning for Closed, and let $i \in \om$ be arbitrary.  Working in $V$, let $\la x_n \ra_{n\in\omega}$ be an enumeration of $L_\kappa$.  Then $\sigma$ produces the sequence $\la \alpha_n, \pi_n \ra_{n \in \omega}$ in response to play of $\la i \ra^{\frown}\la x_n \ra_{n\in\om}$ by Open.  We may compose the maps $\pi_n$ to obtain a commuting system of maps $\pi_{m,n}: L_{\alpha_m} \to L_{\alpha_n}$ for $m<n$, resulting in a directed system
   \[
     \la L_{\alpha_n}, \pi_{n,m} \mid n < m < \omega \ra.
   \]
Let $(N, \epsilon)$ be the direct limit obtained, and for all $n$ let $\pi_{n,\infty} : L_{\alpha_n} \to N$ be the direct limit embedding.  Note that each $\pi_{n,\infty}$ is $\Sigma_0$-preserving.  As per usual, we identify the wellfounded part of the model $N$ with its transitive collapse.  Let $\kappa_\infty = \pi_0(\kappa_0)$.
\begin{Claim}\label{claim:critinf} $\kappa_\infty = \kappa$.
\end{Claim}
\begin{proof}[Proof of Claim~\ref{claim:critinf}]
  First observe that the rule stipulating $\pi_n(\kappa_n) = \kappa_{n+1}$ guarantees $\kappa_\infty = \pi_{n,\infty}(\kappa_n)$ for all $n$, and therefore $\crit(\pi_{n,\infty}) = \kappa_n$ for all $n$.  Now, suppose $\xi < \kappa$.  Then there is some $n$ so that $x_n = \xi$, and so $\xi < \kappa_{n+1}$; since $\crit(\pi_{n+1}) = \crit(\pi_{n+1,\infty}) = \kappa_{n+1}$, we have $\pi_{n+1,\infty}(\xi) = \xi < \kappa_\infty$.  So $\kappa \subseteq \kappa_\infty$.
  
  Now suppose $\xi \mathrel{\epsilon} \kappa_\infty$ in $N$.  Then by definition of direct limit, we have some $n$ and $\bar{\xi} \in L_{\alpha_n}$ so that $\pi_{n,\infty}(\bar{\xi}) = \xi$.  But then clearly $\bar{\xi} < \kappa_n = \crit(\pi_{n,\infty})$, so that $\bar{\xi}=\xi$, and hence $\kappa_\infty \leq \kappa$ as needed.
\end{proof}
  Let us denote the wellfounded ordinal height of $N$, $\ON \cap N$, by $\wfo(N)$.  Note that this makes sense whether or not $N$ is illfounded.  An elementary argument using the $\Sigma_0$ definition of ordinal multiplication shows $\om \cdot \wfo(N) = \wfo(N)$.
\begin{Claim}\label{claim:Lsallintheta} $J_{\wfo(N)} \subseteq N$.
\end{Claim}
\begin{proof}[Proof of Claim~\ref{claim:Lsallintheta}]
Let $(\exists u)\Phi(u,v,w)$, where $\Phi$ is a $\Sigma_0$ formula, be the uniform $\Sigma_1$ definition of the graph of the function $\alpha \mapsto J_\alpha$.  It is immediate from the $\Sigma_0$-preservation of the maps $\pi_{n,\infty}$ and the definition of direct limit that for all $N$-ordinals $\alpha$, there is a unique $N_\alpha \mathrel{\epsilon} N$ so that $N \models (\exists u)\Phi(u,\alpha,N_\alpha)$.

We argue by induction that for $\alpha < \wfo(N)$ we have $N_\alpha = J_\alpha$.  This is clear for successors, using the fact that $J_{\alpha+1}$ is the rudimentary closure of $J_\alpha \cup \{J_\alpha \}$, since rudimentary functions are $\Sigma_0$-definable.

For limit $\lambda$, let $n$ be sufficiently large that there is $\bar{\lambda} \in J_{\alpha_n}$ with $\pi_{n,\infty}(\bar{\lambda}) = \lambda$.  Then by $\Sigma_0$-preservation of $\pi_n$, we must have $\pi_{n,\infty}(J_{\bar{\lambda}}) = N_\lambda$, and so
  \[
    N \models N_\lambda = \bigcup \{ z \in N_\lambda \mid (\exists x \in N_\lambda)\Phi(x, \alpha, z) \}
  \]
Thus by inductive hypothesis, we must have $N_\lambda = \bigcup_{\alpha<\lambda} J_\alpha = J_\lambda$.
\end{proof}
\begin{Claim}\label{claim:wfobig} $\theta \leq \wfo(N)$.
\end{Claim}
\begin{proof}[Proof of Claim~\ref{claim:wfobig}] We have already shown $\kappa \in \wfo(N)$.  Suppose towards a contradiction that $\wfo(N) < \theta$.  By minimality of $\theta$, there must be a tree $S \in J_{\wfo(N)}$ on $\kappa$ that is wellfounded, but has rank greater than $J_{\wfo(N)}$.  By the previous claim, this $S$ belongs to $N$.  Let $n$ be sufficiently large that $S = \pi_{n,\infty}(\bar{S})$ for some tree on $\kappa_n$ in $J_{\alpha_n}$.  Then by the rules of the game tree $T$, there is a ranking function $\bar{\rho}$ for $\pi_{n}(\bar{S}) \in J_{\alpha_{n+1}}$.  By $\Sigma_0$-preservation of the direct limit map, we have $S = \pi_{n,\infty}(\bar{S})$ is ranked by the map $\rho = \pi_{n+1,\infty}(\bar{\rho})$ in $N$.  Now let $x \in S$ be a node with $\rho(s) = \wfo(N)$.  Then $\wfo(N) = \rho" T_s$, which must belong to $N$ by $\Sigma_0$ elementarity; but this contradicts the definition of $\wfo(N)$ as the least ordinal not in $N$.
\end{proof}
\begin{Claim}\label{claim:wfosmall} $\wfo(N) \leq \theta$.
\end{Claim}
\begin{proof}[Proof of Claim~\ref{claim:wfosmall}]
Otherwise, we would have $\theta = \pi_{n,\infty}(\bar{\theta})$ for some $n$ and $\bar{\theta} < \alpha_n$.  But then $L_\theta \subseteq N$ combined with $\Sigma_0$-elementarity implies $L_{\bar{\theta}}$ satisfies the defining properties of $L_\theta$, i.e.\ $L_{\bar{\theta}} \models ``\kappa_n$ is the largest cardinal, is uncountable, $\psi(\kappa_n)$ holds, and all wellfounded trees on $\kappa_n$ are ranked''.
But $\bar{\theta} < \theta$, contradicting minimality.
\end{proof}
The only remaining possibility is $\wfo(N) = \theta$.
\begin{Claim}\label{claim:Nwelf} $N$ is wellfounded.
\end{Claim}
\begin{proof}[Proof of Claim~\ref{claim:Nwelf}] Suppose towards a contradiction that $N$ is illfounded.  By the previous two claims, $\wfo(N) = \theta$.  Fix a nonstandard ordinal $b \mathrel{\epsilon} N$.  Working in $(L_b)^N$, consider the map $F: \omega \to \theta$ defined in the proof of Theorem~\ref{thm:noComp}.  Recall $F$ was cofinal in $\theta$ and $\Delta_1(\{\kappa\})$-definable over $L_\theta$; moreover, since $(L_b)^N$ is an end-extension of $L_\theta$, this map is computed correctly in $(L_b)^N$, and so $F \mathrel{\epsilon} (L_{b+1})^N$.  But then $\theta = \sup F"\omega \mathrel{\epsilon} N$, a contradiction.
\end{proof}

So $N$ is wellfounded.  But it is easy to argue that $N$ is a model of $(\forall x)(\exists \beta) x \in J_\beta$, from which we conclude that $N = L_\theta$ by the proof of Claim~\ref{claim:Lsallintheta}.  Now notice that since $J_\theta$ is obtained as the direct limit of the $J_{\alpha_n}$ under $\Sigma_0$-preserving embeddings, and since all of the $J_{\alpha_n}$ agree as to the truth value of the $\Sigma_1$ statements $\varphi_i(\kappa_n)$, we have that $j=0$ iff $N = J_\theta \models \varphi_i(\kappa)$.
\end{proof}
This completes the proof of Lemma~\ref{lem:truththeta}, and by the remarks preceding, we have proved Theorem~\ref{thm:NoOpen}.
\end{proof}

\section{Separation results in typed theories}\label{sec:Frags}

The previous section proved separation results in the context of weak fragments of set theory with a largest cardinal $\kappa$.  We now carry these over to the setting of set theories with proper classes.

We regard these theories as formulated in a language with two typed variables.  For background on von Neumann-Bernays-G\"{o}del set theory ($\NBG$), the reader may consult Mendelson's text \cite{Mendelson}; Jech's book \cite{Jech} has a listing of the axioms.  Our models of $\NBG$ will have the form $(V, {\in}, \C)$, so that $V$ is the universe of sets and $\C$ is the collection of subclasses of $V$.  The theory of games on proper class-sized trees is developed in $\NBG$ in the natural way; see \cite{GitHam} for details.

\begin{Theorem}\label{thm:theNBGmodel} Assume there is a wellfounded model of $\ZF^{-} + ``(\exists \kappa)\kappa$ is inaccessible''.  Then there is a model of $\NBG$ in which clopen determinacy for proper class games holds, while open determinacy for class games fails.
\end{Theorem}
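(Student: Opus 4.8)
The plan is to exhibit the desired model in the form $\mathcal{N} = (L_\kappa, {\in}, \C)$, where $\theta = \theta^\psi$ and $\kappa$ are as in Definition~\ref{def:theta} for $\psi$ the $\Pi_1$ formula expressing inaccessibility, and $\C := \{X \in L_\theta \mid X \subseteq L_\kappa\}$ is the collection of subclasses of $L_\kappa$ lying in $L_\theta$. First one should check that the hypothesis really produces such a $\theta$: replacing the given wellfounded model by the constructible universe $L^*$ of its transitive collapse, $L^*$ is a transitive model of $\ZF^{-}$ together with ``there exists an inaccessible cardinal''; since $L^* \models \ZF^{-}$, it satisfies that $\kappa$ has a cardinal successor $\kappa^+$, and the level $L_{(\kappa^+)^{L^*}} = (H_{\kappa^+})^{L^*}$ is a transitive model of $\ZF^{-}$ with $\kappa$ its largest cardinal, with $\kappa$ still inaccessible there, and — being a model of $\ZF^{-}$, hence closed under transfinite recursion — ranks all its wellfounded trees. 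It therefore witnesses the existential content of Definition~\ref{def:theta}, and applying Condensation to a $\Sigma_1$ hull (exactly as in the proof of Theorem~\ref{thm:noComp}) produces the countable ordinal $\theta^\psi$.

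Next I would verify $\mathcal{N} \models \NBG$. Because $L_\theta \models \psi(\kappa)$, i.e.\ $\kappa$ is inaccessible in $L_\theta$, we have $(L_\kappa, {\in}) \models \ZFC$, so the set part is as required; Global Choice is witnessed by $<_L{\rst}L_\kappa$, which is definable over $L_\kappa$, hence lies in $L_{\kappa+1} \subseteq L_\theta$, hence in $\C$. Class Comprehension holds since, for a first-order $\phi$ with set quantifiers and class parameters $\vec A \in \C$, the set $\{x \in L_\kappa \mid (L_\kappa, {\in}, \vec A) \models \phi(x)\}$ is definable over the structure $(L_\kappa, {\in}, \vec A)$, which belongs to $L_\theta$; and $L_\theta$, being a transitive rudimentarily closed set containing $\omega$, contains the satisfaction relation of — and so every subset of $L_\kappa$ definable over — any of its members. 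Finally class-Replacement holds: if $F \in L_\theta$ is a function and $a \in L_\kappa$, then $F{\rst}a \in L_\theta$ has image of size $< \kappa$, which by regularity of $\kappa$ in $L_\theta$ is contained in some $L_\beta$ with $\beta < \kappa$, and so lies in $L_\kappa$ by the inaccessibility of $\kappa$ in $L_\theta$.

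The crux is that proper-class games over $\mathcal{N}$ are, after coding, literally games on trees on $\kappa$ in $L_\theta$. Fix in $L_\theta$ a bijection $L_\kappa \leftrightarrow \kappa$; it induces a bijective correspondence $T \leftrightarrow T^*$ between class trees on $L_\kappa$ lying in $\C$ and trees on $\kappa$ lying in $L_\theta$, carrying strategy subtrees to strategy subtrees and ranking functions to ranking functions. Two absoluteness facts drive the translation. (i) \emph{Wellfoundedness is absolute along $T \leftrightarrow T^*$}: ``$T$ is wellfounded in $\mathcal{N}$'' $\iff$ ``$T^*$ is wellfounded in $L_\theta$'' $\iff$ ``$T^*$ is genuinely wellfounded'', the content being that $L_\theta$ ranks its wellfounded trees on $\kappa$ (Definition~\ref{def:theta}) while, by the countable closure of $\kappa$ in $L_\theta$, any illfounded such tree already has a branch in $L_\kappa$. (ii) The assertions ``$\sigma$ is a winning strategy for Player I in the clopen game on $T$'', ``$\sigma$ is a wellfounded strategy subtree of $T$'', and ``$\sigma$ is a strategy for Player II in $T$ avoiding terminal nodes'' are, granting (i), combinatorial conditions on subtrees built from $L_\kappa$, so they transfer verbatim between $\mathcal{N}$ and $L_\theta$ along the correspondence. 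Given these: by Theorem~\ref{thm:ClopdetinTheta} every wellfounded tree on $\kappa$ in $L_\theta$ carries a winning strategy in $L_\theta$, which pulls back to a winning strategy in $\C$ for the clopen game on the corresponding $T$ — so clopen determinacy for class games holds in $\mathcal{N}$; and by Theorem~\ref{thm:NoOpen} there is a tree on $\kappa$ in $L_\theta$ for which neither player has a winning strategy in $L_\theta$ for the associated open game, so the corresponding $T \in \C$ witnesses the failure of open determinacy for class games in $\mathcal{N}$, since a winning strategy for either player in $\mathcal{N}$ would, by (ii), be one in $L_\theta$.

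The step I expect to be the main obstacle is pinning down (i) and the bookkeeping around (ii): one must be sure that the Gitman--Hamkins development of open and clopen games on class trees in $\NBG$, when restricted to the trees and strategies coming from $\C$, really does reduce to the combinatorial conditions above — in particular that ``wellfounded class tree'' in $\mathcal{N}$ coincides with genuine wellfoundedness — and it is exactly the closure features of $\theta$ (the defining clauses of Definition~\ref{def:theta}, the countable-closure remark following it, and the ordinal-arithmetic closure of Lemma~\ref{lem:thetaordclosure}) that secure this. Once the coding $T \leftrightarrow T^*$ and these absoluteness facts are in place, Theorems~\ref{thm:ClopdetinTheta} and~\ref{thm:NoOpen} transfer to $\mathcal{N}$ with essentially no further work.
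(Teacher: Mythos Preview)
Your proposal is correct and follows essentially the same approach as the paper: choose $\psi$ so that $\kappa$ is inaccessible in $L_\theta$, form the two-sorted structure $(L_\kappa,{\in},\P(L_\kappa)\cap L_\theta)$, verify $\NBG$, and invoke Theorems~\ref{thm:ClopdetinTheta} and~\ref{thm:NoOpen}.  If anything you are more explicit than the paper---you spell out the existence argument for $\theta$, the $\NBG$ verification (including class Replacement), and the $T\leftrightarrow T^*$ translation with its absoluteness facts, whereas the paper simply writes down a concrete $\Pi_1$ formula $\psi$ (asserting that $L_\kappa$-cardinals are unbounded in $\kappa$), notes that inaccessibility in $L_\theta$ follows since no new bounded subsets of $\kappa$ appear above $L_\kappa$, checks Class Comprehension via $\Sigma_0$-Comprehension in $L_\theta$, and then defers to the two theorems without further comment.
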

Obviously there are weaker assumptions which suffice, but this is at least a natural one.
\begin{proof}
Let $\psi$ be the formula
 \[
  (\forall \alpha < \kappa)(\exists \mu < \kappa) \alpha < \mu \wedge L_\kappa \models \mu \text{ is a cardinal}.
 \]
This is a $\Pi_1$ formula in parameter $\kappa$, and clearly if $M \models \ZF^{-} + ``\lambda$ is inaccessible'', we have $M \models \psi(\lambda)$.  Condensation arguments imply that $\theta^{\psi}$ (Definition~\ref{def:theta}) exists; as before, let $\kappa$ be the largest cardinal of $L_{\theta} = L_{\theta^{\psi}}$.  Since $L_{\theta^{\psi}}$ contains no bounded subsets of $\kappa$ not in $L_\kappa$, we have that $\kappa$ is inaccessible in $L_\theta$.

We have an obvious way of regarding $L_\theta$ as a model of $\NBG$.  Namely, let $\C_\theta = \P(L_\kappa) \cap L_\theta$, and put $M_\theta = (L_\kappa, {\in}, \C_\theta)$.  The proper classes of $M_\theta$ are precisely the elements of $\C_\theta \setminus L_\kappa$.  Closure of $L_\theta$ under $\Sigma_0$-Comprehension in parameters ensures that $M_\theta$ satisfies the Class Comprehension schema (since quantification over sets in $M_\theta$ is equivalent to bounded quantification by $L_\kappa$).  Note also that any two proper classes of $M_\theta$ are in bijection, and $M_\theta$ satisfies the existence of a universal choice function. 

Combining Theorems~\ref{thm:ClopdetinTheta} and \ref{thm:NoOpen}, we are done.
\end{proof}

\begin{Remark}\label{rem:Adm}
  Working in appropriate strengthenings of $\NBG$, one can develop a theory of $L$ for proper class wellorders of order-type larger than $\ON$, thus obtaining what Gitman and Hamkins call a ``meta-$L$ structure'' $L_{\Theta}$, where $\Theta$ is the supremum of the order-types of proper class well-orders of $\ON$.  These authors ask \cite{GitHam} whether clopen determinacy for class games in $\NBG$ is enough to prove admissibility of this meta-$L$ structure.
 
  We see the answer is no: While the $M_\theta$ of Theorem~\ref{thm:theNBGmodel} satisfies clopen determinacy for class games, its meta-$L$ is precisely $L_\theta$, which is inadmissible.  However, we prove in forthcoming work that clopen determinacy for class games implies the existence of admissible models of ``there is an inaccessible cardinal'', so that consistency strength-wise, the determinacy assumption is the stronger theory. 
  \end{Remark}

Taking $\psi(u)$ to be the formula ``all sets in $L_u$ are countable in $L_u$'', we similarly obtain the main result of Schweber's \cite{Schweber} (see that paper for background on the weak base theory $\RCA$ for third order arithmetic):

\begin{Theorem} $(\omega, \R \cap L_\theta, \omega^{\R \cap L_\theta} \cap L_\theta)$ is a model of $\RCA$ satisfying $\clopdetR$, but not $\opdetR$.
\end{Theorem}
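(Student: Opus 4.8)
The plan is to deduce the statement from the set-theoretic analysis of \S\ref{sec:BasicDef}--\S\ref{sec:ClopSep}, via a dictionary between third-order arithmetic and the constructible hierarchy parallel to the passage from class set theory to $L_\theta$ in Theorem~\ref{thm:theNBGmodel}. First I would observe that $\psi(u)\equiv$ ``every set in $L_u$ is countable in $L_u$'' is (equivalent to) a $\Pi_1$ formula, so Definition~\ref{def:theta} applies, and that $\theta=\theta^\psi$ exists unconditionally. For existence it is enough to produce a single witness: $L_{\om_2^L}$ has largest cardinal $\om_1^L$, which is regular and uncountable there; $\psi(\om_1^L)$ holds, since every $x\in L_{\om_1^L}$ is countable in $L$ with a surjection from $\om$ onto it appearing at a countable stage; and $L_{\om_2^L}\models\ZF^{-}$ (as $\om_2^L$ is regular in $L$), hence ranks every wellfounded tree it contains. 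So $\theta$ exists; let $\kappa$ be the largest cardinal of $L_\theta$, and recall $L_\kappa\models$ ``every set is countable'' while $\kappa$ is regular and uncountable in $L_\theta$.

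Next I would set up the translation and check $\RCA$ together. Since $\kappa$ is the largest cardinal of $L_\theta$ and $L_\theta\models V=L$, there is in $L_\theta$ a bijection $b$ of $\R\cap L_\theta$ with $\kappa$. Under $b$, a game on $\R$ given by a tree $T$, with its third-order strategies in $\mathcal{A}:=(\om,\R\cap L_\theta,\om^{\R\cap L_\theta}\cap L_\theta)$, translates to the game on $\kappa$ given by $b[T]$, with its strategies in $L_\theta$: the third-order objects of $\mathcal{A}$ correspond exactly to the subsets of $\kappa^{<\om}$ lying in $L_\theta$, and the translation preserves both the clopen/open classification of the tree and whether a strategy is winning, since $b\in L_\theta$. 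With this in hand, $\mathcal{A}\models\RCA$ follows exactly as $M_\theta\models\NBG$ did in the proof of Theorem~\ref{thm:theNBGmodel}: the second-order part $(\om,\R\cap L_\theta)$ is the set of reals of the model $L_\theta$ of $V=L$, hence closed under join, relative computability, the jump, and arithmetical comprehension; the third-order comprehension instances of $\RCA$, pulled back along $b$, become $\Sigma_0$-in-parameters definable subsets of $\kappa$, which lie in $L_\theta$ since each $J_\alpha$ models $\Sigma_0$-Comprehension; and the choice and well-ordering axioms of $\RCA$ hold because $L_\theta\models V=L$ yields a definable global well-order (see \cite{Schweber} for the precise axiomatization).

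It remains to transfer the two determinacy facts. For $\clopdetR$: a clopen game on $\R$ in $\mathcal{A}$ becomes, under $b$, a clopen --- hence wellfounded --- game on $\kappa$ in $L_\theta$, where Theorem~\ref{thm:ClopdetinTheta} supplies one of the players a winning strategy; pushing it back through $b^{-1}$ gives a winning third-order strategy in $\mathcal{A}$. For the failure of $\opdetR$: let $T$ on $\kappa$ be the tree built in the proof of Theorem~\ref{thm:NoOpen}, witnessing $L_\theta\not\models\opdetk$, and put $T^*=b^{-1}[T]$, a tree on $\R$ in $\mathcal{A}$. Then the open game on $T^*$ is not determined in $\mathcal{A}$: a winning strategy for Open in $\mathcal{A}$ would transport to a wellfounded subtree of $T$ in $L_\theta$, which --- because $L_\theta$ ranks its wellfounded trees on $\kappa$, the decisive clause of Definition~\ref{def:theta} --- is genuinely wellfounded in $V$, hence a winning strategy for Open in $V$, contradicting Lemma~\ref{lem:Closedwins}; and a winning strategy for Closed in $\mathcal{A}$ would transport to a winning strategy for Closed in $L_\theta$, which cannot exist by Theorem~\ref{thm:NoOpen} (it would, by Lemma~\ref{lem:truththeta}, compute the $\Sigma_1(\{\kappa\})$-theory of $L_\theta$, which by Theorem~\ref{thm:noComp}(2) is not in $L_\theta$).

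I expect the real work, and the only genuine obstacle, to be making the dictionary airtight rather than proving anything new, since Sections~\ref{sec:BasicDef}--\ref{sec:ClopSep} already contain the substance: pinning down Schweber's exact axiomatization of $\RCA$ and confirming each axiom survives the translation, and verifying that the clopen/open classification, the upward absoluteness of ``Open wins'', and the impossibility of a winning Closed strategy inside the model all transfer correctly between games on $\R$ in $\mathcal{A}$ and games on $\kappa$ in $L_\theta$ under the internal bijection $b$.
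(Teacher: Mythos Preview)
Your proposal is correct and follows precisely the route the paper intends: the paper offers no proof beyond the phrase ``we similarly obtain'' after specifying $\psi(u)$, so you have faithfully unpacked what that ``similarly'' means---choosing the right $\psi$, observing $\theta^\psi$ exists (your witness $L_{\omega_2^L}$ is apt), identifying $\R\cap L_\theta$ with $\kappa$ via an internal bijection, and transporting Theorems~\ref{thm:ClopdetinTheta} and~\ref{thm:NoOpen} across that bijection. Your closing caveat is exactly right: the only work beyond \S\ref{sec:BasicDef}--\S\ref{sec:ClopSep} is bookkeeping against Schweber's axiomatization, not new mathematics.
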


Various other results may be formulated for languages with at least two ``top types'' beyond $0$, e.g.\ by letting $\psi$ have the intended meaning ``$\kappa = \omega_5$'' (7 types) or ``$\kappa = \omega_{\omega+1}$'' ($\omega+2$ types), to give those examples mentioned in the abstract.  We just point out that the structures $L_{\theta^{\psi}}$ always satisfy $\Sigma_0$-Comprehension, and so satisfy full Comprehension for formulas whose quantifiers range over objects of all but the largest type; thus these structures ought to convert to models of any reasonable base theory in higher order arithmetic.

\section{On Borel class determinacy in Morse-Kelley set theory}
Throughout this section, we let the stratified Borel hierarchy $\la \Sigma^0_\alpha \ra_{\alpha<\omega_1}$ of subsets of $X^\om$, for various sets $X$, be defined in the natural way, by letting $\Sigma^0_1$ be the class of complements of sets of the form $[T]$ in $X^\om$ for trees $T$ on $X$, and iterating under countable union and complement.

We answer one further question of Gitman and Hamkins \cite{GitHam}, as to whether Borel determinacy for proper class games is provable in the stronger set theory Morse-Kelley, or $\MK$ (about which see e.g.\ \cite{Mendelson}).  As one might expect, the answer is no: We demonstrate the existence of a proper class game with a $\Sigma^0_4$ winning condition which is not provably determined in $\MK$, in close analogy with H.\ Friedman's important result \cite{Fr} in the context of second order arithmetic.

Thankfully we need not delve into a proof of Friedman's theorem here, and may take the following strengthening due to Martin \cite{MaBook} as a black box (we present a complete proof of the lightface case in \cite{Ha}, pp.\ 18-19).  Here $\ZF^{-}$ is $\ZF$ with the Axiom of Power Set dropped.

\begin{Theorem}[Friedman, Martin]\label{thm:Friedman}  Let $z$ be a real, and suppose $\beta$ is minimal so that $L_\beta[z]$ is a model of $\ZF^{-} + V{=}L[z]$; then there is a $\Sigma^0_4(z)$ game for which no winning strategy belongs to $L_\beta[z]$.
\end{Theorem}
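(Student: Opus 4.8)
The plan is to reduce the statement to its lightface instance $z = 0$---proved in \cite{Ha} (pp.\ 18--19), and in the present sharp form, due to Martin, in \cite{MaBook}---by a routine relativization to $L[z]$. Every level $L_\alpha$ in the lightface argument becomes $L_\alpha[z]$; the coding game acquires $z$ as its sole real parameter, so that, being lightface over the original lightface game, its payoff is $\Sigma^0_4(z)$; and the Condensation and minimality arguments for $\beta$ go through over $L[z]$ verbatim. Thus the real content is the lightface statement, which I would prove by a Friedman-style coding game---structurally a close relative of the game in Theorem~\ref{thm:NoOpen}.

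The game is played on $\omega$. Over infinitely many rounds Closed builds codes for a chain of structures $M_0, M_1, \dots$, each required to satisfy a fixed finite list of first-order axioms that make $M_n$ carry an $L$-hierarchy and model $V{=}L$, linked by codes for $\Sigma_0$-elementary embeddings $\pi_n : M_n \to M_{n+1}$. This mirrors the tree $T$ in the proof of Theorem~\ref{thm:NoOpen}, except that---the moves now being natural numbers rather than ordinals---genuineness of the levels cannot be imposed outright, but must be coaxed out by challenges. Besides feeding in indices that Closed's chain must ``capture'', Open may raise (i) \emph{ranking challenges}, demanding that a named tree occurring in the chain be ranked, or exposed as illfounded, further along (as in the last rule for $T$; these serve also to keep the coded structures honest), and (ii) \emph{Collection challenges}, naming a formula $\varphi$ and a set $a$ and demanding that Closed eventually produce the \emph{exact} set $\{\,y : \exists x \in a\; \varphi(x,y)\,\}$ as computed along the chain (if $\varphi$ fails to be total and functional on $a$, the challenge is vacuous). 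Write $N$ for the direct limit. As in Claims~\ref{claim:Lsallintheta} and~\ref{claim:wfobig}, $J_{\wfo(N)} \subseteq N$, and the Collection challenges together with the minimality of $\beta$ force $\wfo(N) \geq \beta$: if $\wfo(N) < \beta$, then, $\wfo(N)$ lying below the least level of $L$ that models $\ZF^{-}$, some instance of Collection must fail in $L_{\wfo(N)}$ (over $V{=}L$, the other axioms of $\ZF^{-}$ reduce to Collection), and Open has a Collection challenge whose correct answer is a set of ordinals cofinal in $\wfo(N)$; this answer lies in $N$, so its union recovers $\wfo(N)$ itself as an element of $N$---a contradiction. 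The ``exact range'' stipulation is what makes the argument survive $N$ being illfounded (a mere superset would leave the union nonstandard); and note that $N$ need not be forced wellfounded---with $L_\beta$ admissible, the inadmissibility device of Claim~\ref{claim:Nwelf} is unavailable---since that is not needed.

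The rest of the argument proceeds as in Theorem~\ref{thm:NoOpen}. In $V$, Closed wins, by the hull-and-collapse recipe of Lemma~\ref{lem:Closedwins} applied to an $\omega$-chain of elementary substructures of initial segments of $L_\beta$ cofinal in $\beta$: she meets every ranking challenge because $L_\beta$ ranks its wellfounded trees at bounded stages, and every Collection challenge because $L_\beta \models \ZF^{-}$---the one place the hypothesis on $L_\beta$ is used on the positive side. Conversely, suppose $\sigma \in L_\beta$ were a winning strategy for Closed. Playing $\sigma$ against the Open strategy that, by a recursive bookkeeping, eventually issues every ranking and Collection challenge about every set that appears, one obtains $N$ by a recursion in $\sigma$; hence a real $e$ coding $N$ is recursive in $\sigma$, so $e \in L_\beta$. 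But the wellfounded part of $N$ yields a wellorder of length $\wfo(N) \geq \beta$ that is $\Pi^1_1$ in $e$, hence also lies in $L_\beta$, while $L_\beta$---being a wellfounded model of $\ZF^{-}$---contains no real coding a wellorder of length $\geq \omega_1^{L_\beta}$, let alone of length $\geq \beta$; contradiction. So Closed has no winning strategy in $L_\beta$, and since Closed wins the (Borel, hence determined) game in $V$, Open has no winning strategy at all, a fortiori none in $L_\beta$. Relativizing as above yields the theorem.

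The step I expect to be the main obstacle is the complexity accounting---keeping the payoff at $\Sigma^0_4$. Taken literally, ``$N \models \ZF^{-}$'' is a scheme of unbounded quantifier depth and ``$N$ is wellfounded'' is $\Pi^1_1$; the whole point of the challenge--response format is to replace both by a condition of the form ``for every challenge Open raised, either it was illegitimate or Closed eventually answered it''. Illegitimacy of a Collection challenge (``$\varphi$ is not total and functional on $a$ along the chain'') is about $\Sigma^0_2$ in the play, a correct answer about $\Pi^0_2$, so each conjunct is roughly $\Sigma^0_3$; the universal quantifier over the countably many challenges, together with the $\Sigma^0_1$ escape clauses for Open's own rule violations, then lands the payoff at $\Sigma^0_4$ rather than a level higher. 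Getting this bookkeeping exactly right---and verifying, in the style of Lemma~\ref{lem:Closedwins}, that Closed can meet all her obligations simultaneously in $V$---is the delicate part, and is carried out in full by Martin \cite{MaBook} and in \cite{Ha}.
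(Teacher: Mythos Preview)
The paper does not prove this theorem at all: it is explicitly invoked as a black box, with the lightface case deferred to \cite{Ha} and the general statement to Martin \cite{MaBook}. So there is no proof in the paper to compare against; your proposal goes well beyond what the paper does by actually sketching the argument.

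That said, your sketch is a faithful outline of the Friedman--Martin coding game and is structured, sensibly, as a second-order-arithmetic analogue of the paper's own Theorem~\ref{thm:NoOpen}. The reduction to the lightface case by relativization is routine and correct. The key differences from Theorem~\ref{thm:NoOpen} you identify are the right ones: moves are integers rather than ordinals, so genuineness of the $L$-levels must be enforced by challenges rather than stipulated; and since $L_\beta$ \emph{is} admissible, the inadmissibility trick of Claim~\ref{claim:Nwelf} is unavailable, so one settles for $\wfo(N) \geq \beta$ via Collection/Replacement challenges rather than forcing $N$ wellfounded. Your observation that the challenge must demand the \emph{exact} range (so that its union is a standard ordinal even if $N$ is illfounded) is the crucial subtlety, and your reduction of $\ZF^{-}$ over $V{=}L$ to Collection is correct. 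The final step---a winning strategy $\sigma \in L_\beta$ yields, recursively in $\sigma$, a real coding a wellorder of length $\geq \beta$---is the standard endgame.

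Your caveat about the complexity bookkeeping is well placed: arranging the challenge--response protocol so that Closed's winning condition lands at exactly $\Pi^0_4$ (hence Open's at $\Sigma^0_4$) is where the real work lies, and you are right to defer the details to \cite{MaBook} and \cite{Ha}. In sum, your proposal is correct and supplies what the paper deliberately omits.
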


\begin{Theorem}\label{thm:classFriedman} Suppose there is a transitive set model of $\ZF^{-}+(\exists \kappa)``\kappa$ is inaccessible''.  
%
Then there is a sequence of class trees, $\la T_{i,j,k} \ra_{i,j,k \in \om}$, definable by a quantifier-free formula with no parameters, so that 
$\bigcup_{i\in\om}\bigcap_{j\in\om}\bigcup_{k\in\om} [T_{i,j,k}]$ is the payoff class of a game whose determinacy cannot be proved in $\MK$.
\end{Theorem}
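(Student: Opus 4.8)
The plan is to run the argument of \S\S3--4 one ``level'' higher: in place of the ``ranks its wellfounded trees'' threshold $\theta^{\psi}$ of Definition~\ref{def:theta} I use the least level of $L$ modelling $\ZF^{-}$, and in place of open determinacy I use $\Sigma^0_4$ determinacy via Theorem~\ref{thm:Friedman}. First I fix the $\Pi_1$ formula $\psi(\kappa)$ of Theorem~\ref{thm:theNBGmodel} asserting that $\kappa$ is inaccessible, let $\kappa$ be the least inaccessible of the given transitive model $W$ (equivalently of $L^W$, inaccessibility being downward absolute to $L$), and let $\beta$ be the \emph{least} ordinal with $L_\beta \models \ZF^{-} + V{=}L + $ ``there is a largest cardinal $\kappa$, which is regular, uncountable, and satisfies $\psi(\kappa)$''. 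Such a $\beta$ exists below $\ON\cap W$: working inside $L^W\models\ZF^{-}$ one forms $(\kappa^{+})^{L^W}$ and notes that $L_{(\kappa^{+})^{L^W}}$ is such a model, since $L_{\lambda^{+}}\models\ZF^{-}$ for every infinite cardinal $\lambda$ by regularity of $\lambda^{+}$; and ``$L_\beta$ satisfies this theory'' is absolute between $W$ and $V$ because any failure is witnessed in $L_{\beta+1}$. Thus $L_\beta$ and $L_\kappa$ are genuine sets; note $\theta^{\psi}<\beta$ (since $L_{\theta^{\psi}}$ is not even admissible, by Theorem~\ref{thm:noComp}), and, crucially, $L_\kappa\models\ZFC$ because $\kappa$ is inaccessible in $L_\beta$ and this is witnessed correctly in $L_{\kappa+1}$.

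Next I set $\C=\P(L_\kappa)\cap L_\beta$ and $M_\beta=(L_\kappa,{\in},\C)$ and claim $M_\beta\models\MK$. The first-order axioms and the existence of a universal choice function hold because $L_\kappa\models\ZFC$ carries a definable global wellorder. For the impredicative Class Comprehension schema: a second-order formula over $M_\beta$ translates into a first-order formula over $L_\beta$ with set quantifiers bounded by $L_\kappa$ and class quantifiers relativized to $\{S\in L_\beta:S\subseteq L_\kappa\}$, so the subset of $L_\kappa$ it defines lies in $L_\beta$ by \emph{full} Separation in $L_\beta$. This is exactly the strength $\ZF^{-}$ has beyond the mere $\Sigma_0$-Comprehension available in $L_{\theta^{\psi}}$, and is why the model $M_\theta$ of Theorem~\ref{thm:theNBGmodel} satisfied only $\NBG$ whereas $M_\beta$ satisfies $\MK$.

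The heart of the proof is to produce the $\Sigma^0_4$ class game. I would take the game of Theorem~\ref{thm:Friedman}---equivalently, the lightface construction of \cite{Ha}---and \emph{relativize} it, replacing the base structure ``$\omega$ together with the real $z$'' by the structure $L_\kappa$, and the minimal model ``$L_\gamma[z]\models\ZF^{-}+V{=}L[z]$'' by our $L_\beta$. Concretely, moves are elements of $L_\kappa$ (hence class-sized in $M_\beta$); the two players build, exactly as in Lemma~\ref{lem:Closedwins} and the proof of Lemma~\ref{lem:truththeta}, an $\omega$-chain of $\Sigma_0$-elementary embeddings $J_{\alpha_0}\to J_{\alpha_1}\to\cdots$ with one player feeding in elements of $L_\kappa$ to be captured, together with a truth value $j$ for an $\varphi_i(\kappa)$ proposed by the other; but now the payoff is the $\Sigma^0_4=\bigcup_i\bigcap_j\bigcup_k$ winning condition of the Friedman--Martin game, rigged so that the direct limit of a successful play is forced to be a \emph{wellfounded model of $\ZF^{-}$}---the outer $\bigcup_i$ running over instances of Replacement, the $\bigcap_j\bigcup_k$ pattern expressing that every challenge to wellfoundedness and to a given instance is met at a later stage. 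This is precisely the extra power a $\Sigma^0_4$ payoff provides over an open (or even $\Sigma^0_3$) one, and is the content of Theorem~\ref{thm:Friedman}. The trees $T_{i,j,k}$ are the components of this payoff together with the game rules, and since $L_\kappa$ carries a definable wellorder they are quantifier-free definable over $L_\kappa$ without parameters.

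It then remains to show that no winning strategy for either player lies in $\C$. Since $L_\beta\models\ZF^{-}$ with $\kappa$ inaccessible, $L_\beta$ ranks its wellfounded trees on $\kappa$ and contains all its own $\omega$-sequences from $L_\kappa$ (by the remark following Definition~\ref{def:theta}), so $M_\beta\models$``the game is determined'' is equivalent to $L_\beta\models$``$\sigma$ is winning'' for some $\sigma\in\C$. Now relativize the proof of Theorem~\ref{thm:Friedman} to the base $L_\kappa$ (this is the analogue of Lemmas~\ref{lem:Closedwins}--\ref{lem:truththeta} together with the $\Sigma^0_4$-specific half of Friedman--Martin, absent in \S3): having the opponent enumerate $L_\kappa$, any winning strategy $\sigma$ forces the direct limit $N$ to be a wellfounded $N\cong L_{\beta'}$ modelling ``$\ZF^{-}+\kappa$ is the largest cardinal, regular, uncountable, $\psi(\kappa)$'' with the played truth values agreeing with $L_{\beta'}$. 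As in Theorem~\ref{thm:ClopdetinTheta}, this construction only needs recursions of length bounded by the ranks involved---at most $\kappa$, then $\beta$---so it can be carried out with $L_\beta$ as the ambient universe; hence $N\in L_\beta$, forcing $\beta'<\beta$, which contradicts the minimality of $\beta$ (equivalently, one concludes that $\sigma$ computes the first-order theory of $(L_\beta,{\in},\kappa)$, which would then be definable over $L_\beta$, contradicting Tarski). Combined with $M_\beta\models\MK$, this shows $M_\beta$ witnesses that $\MK$ cannot prove $\bigcup_i\bigcap_j\bigcup_k[T_{i,j,k}]$ is determined.

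The main obstacle is the relativization in the last two paragraphs: one must check that the Friedman--Martin construction and analysis are genuinely base-agnostic, applying verbatim with $L_\kappa$ in place of $\omega$---in particular that the $\Sigma^0_4$ payoff really does pin the direct limit down to full $\ZF^{-}$ rather than merely to the ``ranks its trees'' of \S3, and that \emph{both} players' winning strategies (not just the closed player's, as in the open game of \S3) force the contradiction with minimality---and that the a priori gap between $M_\beta$-determinacy and genuine determinacy is harmless, which is exactly what the internal-to-$L_\beta$ formulation of the decisive construction secures.
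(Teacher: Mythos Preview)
Your setup---choosing $\beta$ least so that $L_\beta\models\ZF^{-}$ with $\kappa$ its largest cardinal and inaccessible there, and verifying that $M_\beta=(L_\kappa,{\in},\P(L_\kappa)\cap L_\beta)$ models $\MK$ via full Separation in $L_\beta$---matches the paper and is correct.

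The construction of the undetermined game, however, diverges fundamentally from the paper, and your version has a real gap. The paper does \emph{not} relativize the direct-limit game of \S\ref{sec:ClopSep} so as to force the limit to be a $\ZF^{-}$-model. Instead it uses a collapse-forcing reduction (Theorem~\ref{exercise}): the game on $\kappa$ has the players alternately extend a condition in $\Coll(\omega,\kappa)$ and play a natural number; the union of the conditions yields a surjection $h:\omega\to\kappa$ coding a real $z_h$, and the payoff is $\exists i\,\forall j\,\exists k\,\forall n\,\varphi(i,j,k,x\rst n,z_h\rst n)$, with $\varphi$ taken verbatim from the $\Sigma^0_4(z_G)$ game that Theorem~\ref{thm:Friedman} supplies in $L_\beta[G]$ for an actual generic $G$. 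A strategy $\sigma\in L_\beta$ is then defeated by observing that the set of collapse-coordinate responses $\sigma$ can make is dense below the current condition, so in $L_\beta[G]$ one can steer $\sigma$ along $G$ and extract from it a strategy for the $\omega$-game with payoff $A_G$---contradicting Theorem~\ref{thm:Friedman}. Thus Friedman--Martin is invoked as a black box about games on $\omega$; nothing about its internal mechanism is lifted to base $L_\kappa$.

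Your proposed direct relativization is not substantiated. First, the description of the $\Sigma^0_4$ payoff (``outer $\bigcup_i$ running over instances of Replacement, $\bigcap_j\bigcup_k$ expressing that wellfoundedness challenges are met'') does not match how the Friedman--Martin game actually operates, and you have not exhibited such a payoff. Second, the crucial wellfoundedness step---the analogue of Claim~\ref{claim:Nwelf}---does not transfer: in \S\ref{sec:ClopSep} it hinged on the $\Delta_1(\{\kappa\})$-definable cofinal map $F:\omega\to\theta$ of Theorem~\ref{thm:noComp}, but $L_\beta\models\ZF^{-}$ admits no definable cofinal $\omega$-sequence whatsoever, so an illfounded direct limit cannot be ruled out by the same mechanism. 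Third, your sentence ``this construction only needs recursions of length bounded by the ranks involved \dots\ so it can be carried out with $L_\beta$ as the ambient universe; hence $N\in L_\beta$'' is simply wrong: the enumeration of $L_\kappa$ used to form $N$ lives in $V$, not in $L_\beta$, and if the argument worked one would get $N\cong L_\beta$, not $N\in L_\beta$. The paper's forcing reduction sidesteps all three issues.
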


\begin{proof} Let $\kappa$ be the least so that some $\beta >\kappa$ exists such that $L_{\beta} \models \ZF^{-} + ``\kappa$ is inaccessible.''  As before, the theorem is proved by reinterpreting this set model as a two-sorted model $M = (L_\kappa, {\in}, \P(L_\kappa) \cap L_\beta)$. Note that this time, we have that the full Class Comprehension schema (allowing formulae with class quantifiers) holds in $M$, because $L_\beta$ is a model of the stronger theory $\ZF^{-}$.

The result is more or less immediate from the following:
\begin{Theorem}[Martin]\label{exercise}
  Let $\beta > \kappa$ be the least ordinal above $\kappa$ so that $L_\beta \models \ZF^{-}$.  Then there is a $\Sigma^0_4$ game on $\kappa^{<\om}$ which has no winning strategy in $L_\beta$.
\end{Theorem}
This is Exercise 1.4.5 in Martin's unpublished determinacy manuscript \cite{MaBook}.  For the reader's convenience, we offer our solution.

\begin{proof}[Proof of Theorem~\ref{exercise}]  Regard conditions in the collapse poset $\Coll(\omega,\alpha)$ as finite sequences $p : n \to \alpha$.  For $G$ $\Coll(\omega,\alpha)$-generic, we let $z_G$ denote the collapse real coded by $G$; more generally, any $h:\om \to \ON$ induces a prewellorder $z_h$ of $|p|\in\om$.
  
  For the rest of the proof, fix $G$ $\Coll(\om,\kappa)$-generic over $L_\beta$.  Then $L_\beta[G]$ is the minimal transitive model of $\ZF^{-}$ containing the real $z_G$.  By Theorem~\ref{thm:Friedman}, $\Sigma^0_4(z_G)$ determinacy fails in $L_\beta[G]$.  So let
  \[
  A_G := \{x \in \om^\om  \mid  \exists i\forall j\exists k\forall n \varphi(i,j,k,x \rst n, z_G \rst n)\},
  \]
where $\varphi$ is a recursive condition, be a $\Sigma^0_4(z_G)$ set witnessing this failure.

Working in $L_\beta$, define a game on $\kappa^{<\om} \times \om$ (easily converted to one on $\kappa$ via tuple coding) as follows: Moves in the game are pairs $\la p, x \ra$ where $p \in \Coll(\om,\kappa)$ and $x \in \om$.  The players must maintain $p_{n+1} \lneq p_n$ for all $i$.  Player I wins if the infinite play $\la h, x\ra := \la \bigcup_n p_n, \la x_n \ra_{n\in\om} \ra$ satisfies
  \[
    \exists i \forall j \exists k \forall n \varphi(i,j,k, x \rst n, z_h \rst n).
  \]
This clearly defines a $\Sigma^0_4$ subset of $\kappa^\om$.

We claim there is no winning strategy in this game in $L_\beta$.  Suppose without loss of generality that $\sigma$ is a winning strategy for Player I.  We will obtain a contradiction by converting this winning strategy to one in $L_\beta[G] = L_\beta[z_G]$ that wins the game on $\om$ with payoff $A_G$.

\begin{Claim}\label{claim:GenExt}
  Suppose $s = \la p_0, x_0, \dots p_n, x_n \ra$ is a position reachable by $\sigma$ where Player I has moved last, and $k \in \om$; then the set
    \[
      D_{\la s, x\ra} = \{ q \mid (\exists p)\sigma( \la p_0, x_0, \dots, p_n, x_n, p, x_{n+1}\ra) = \la q, l \ra \text{ for some }l \in \om \}
    \] 
is dense in $\Coll(\om,\kappa)$ below $p_n$.
\end{Claim}
\begin{proof} This is immediate: any pair $\la p, k \ra$ with $p \lneq p_n$ constitutes a legal move for Player II, and $\sigma$ must reply with $q \leq p$.
\end{proof}

Now in $L_\beta[G]$, it is now easy to describe a strategy for Player I to win the game with payoff $A_G$: copy the moves in the game on $\om$ to one in the game on $\kappa^{<\om} \times \om$, and use Claim~\ref{claim:GenExt} to attribute moves $p_{2n+1}$ to Player II. 

In more detail, assume inductively that we have reached a position $\la x_0,\dots, x_{2n} \ra$ in the game on $\om$, so that some position $s =\la x_i, p_i \ra_{i < 2n+1}$ in the larger game is according to $\sigma$, and $p_{2n} \in G$.  We respond to Player II's next move $x_{2n+1}$ using the Claim to find $p_{2n+1} \in D_{\la s, x_{2n+1} \ra} \cap G$; then $\sigma$ makes a reply $p_{2n+2},x_{2n+2}$ with $p_{2n+2} \in G$, and the construction continues.  

Thus any play $x \in \om^\om$ according to the strategy we have described may be copied to a play $\la G, x\ra$ compatible with $\sigma$.  Since $\sigma$ is a winning strategy for Player I, we must have $x \in A_G$.  But the strategy we have described is clearly definable in $L_\beta[G]$, which completes the contradiction.
\end{proof}

We finish by defining $S$ to be the class of sequences of pairs recording, in an increasing fashion, a potential result of a partial play in the game from the previous proof (recall $\kappa = \ON^M$), and setting
 \[
  T_{i,j,k} = \{ s \in S \mid (\forall n < |s|) \varphi(i,j,k, s_n(0),s_n(1)) \}.
 \]
This completes the proof of Theorem~\ref{thm:classFriedman}.
\end{proof}

Thus Borel class games need not be determined in $\MK$.  We can say a bit more:  Friedman's level-by-level analysis of the strength of Borel determinacy beyond $\Sigma^0_4$ likewise may be applied to the setting of proper class games, by starting with a model with an inaccessible $\kappa$ in which $V_{\kappa+\alpha}$ exists and collapsing $\kappa$ to be countable.  We have:

\begin{Theorem}
  Work in $\ZFC$, and assume there is an inaccessible cardinal.  Let $\alpha < \omega_1$.  Then there is a transitive model $M$ of $ZF^{-} + $``There is an inaccessible cardinal $\kappa$ such that $V_{\kappa+\alpha}$ exists, and $\Sigma^0_{1+\alpha+3}$ determinacy for games on $\kappa$ fails.''
\end{Theorem}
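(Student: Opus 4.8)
The plan is to adapt the proof of Theorem~\ref{exercise} (and hence of Theorem~\ref{thm:classFriedman}), replacing the basic Friedman--Martin theorem (Theorem~\ref{thm:Friedman}) by its level-by-level refinement, also due to Friedman \cite{Fr} and Martin \cite{MaBook}: \emph{if $z$ is a real and $\beta$ is least so that $L_\beta[z]$ models $\ZF^{-}+V=L[z]+$``$V_{\om+\alpha}$ exists'', then there is a $\Sigma^0_{1+\alpha+3}(z)$ game on $\om$ with no winning strategy in $L_\beta[z]$.} This is the instance of Friedman's result from which Martin derives the optimality of his level-by-level Borel determinacy theorems, and I would treat it as a black box, just as Theorem~\ref{thm:Friedman} was treated (for $\alpha=0$ the ``$V_{\om+\alpha}$ exists'' clause is vacuous and this is exactly Theorem~\ref{thm:Friedman}).

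First I would produce the model. Since $V$ has an inaccessible cardinal $\kappa_0$, which remains inaccessible in $L$, a routine computation shows there is an ordinal $\gamma$ with $L_\gamma\models\ZF^{-}+$``there is an inaccessible $\kappa$ such that $V_{\kappa+\alpha}$ is a set'': taking $\delta=|(V_{\kappa_0+\alpha})^L|^L$, the model $L_{\delta^{+L}}=(H_{\delta^+})^L$ satisfies $\ZF^{-}$ while computing correctly both the inaccessibility of $\kappa_0$ and the iterated power sets $(V_{\kappa_0+j})^L$ for $j\leq\alpha$ (using $|(V_{\kappa_0+j})^L|^L\leq\delta$). Hence there is a least ordinal $\beta$ with $L_\beta\models\ZF^{-}+(\exists\kappa)(\kappa\text{ is inaccessible}\wedge V_{\kappa+\alpha}\text{ exists})$, and a L\"{o}wenheim--Skolem argument shows $\beta$ is countable. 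Let $\kappa$ be the least such witness inside $L_\beta$, and set $M:=L_\beta$. Then $M$ manifestly satisfies the ``positive'' clauses; it remains to check $M\models$``$\Sigma^0_{1+\alpha+3}$-determinacy for games on $\kappa$ fails''.

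For this last point I would run the argument of Theorem~\ref{exercise}, now collapsing $\delta^*:=|V_{\kappa+\alpha}^{L_\beta}|^{L_\beta}$ in place of $\kappa$ itself. Force with $\Coll(\om,\delta^*)$ over $L_\beta$ to obtain $G$ and a real $z=z_G$ coding a copy on $\om$ of the transitive set $V_{\kappa+\alpha}^{L_\beta}$, so that $L_\beta[G]=L_\beta[z]$. Using ground-model definability of $L_\beta$ inside $L_\beta[G]$, together with the minimality of $\beta$, one checks that $L_\beta[z]$ is precisely the minimal transitive model of $\ZF^{-}+V=L[z]+$``$V_{\om+\alpha}$ exists'' containing $z$ (the $\alpha$-fold iterated power set being recovered from $z$ via the collapsed copy of $V_\kappa^{L_\beta}$). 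The level-by-level Friedman--Martin theorem then supplies a $\Sigma^0_{1+\alpha+3}(z)$ game $A_G$ on $\om$ with no winning strategy in $L_\beta[z]$. Exactly as in the proof of Theorem~\ref{exercise}, one now defines inside $L_\beta$ a game on $\kappa^{<\om}\times\om$ in which the two players jointly build a $\Coll(\om,\delta^*)$-generic filter (the rule $p_{n+1}\lneq p_n$ is a closed condition and does not affect the complexity class) while playing out $A_G$; the analogue of the density lemma Claim~\ref{claim:GenExt}, and the copying construction that follows it, then convert any winning strategy for this game lying in $L_\beta$ into a winning strategy for $A_G$ in $L_\beta[z]$, a contradiction. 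As $L_\beta$ contains all of its own strategies, $M=L_\beta$ itself sees that this $\Sigma^0_{1+\alpha+3}$ game on $\kappa$ is undetermined.

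The step I expect to be delicate is the bookkeeping in the previous paragraph: isolating the precise form of the level-$\alpha$ Friedman--Martin ingredient --- in particular getting the index shift between ``$\alpha$ iterations of the power set'' and ``$\Sigma^0_{1+\alpha+3}$'' right, and keeping track of whether the iteration begins at $\om$, at the reals, or at $V_\kappa$ --- and then checking that collapsing $V_{\kappa+\alpha}^{L_\beta}$ to $\om$ really produces a model minimal for the resulting condition, so that the black box applies unchanged. Once the index arithmetic is settled, the transfer to games on $\kappa$ is line-for-line the argument of Theorem~\ref{exercise}, and the level-by-level extensions beyond $\Sigma^0_4$ of the other Borel-hierarchy results of this section are proved in the same way.
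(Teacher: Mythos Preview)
Your overall strategy matches the paper's one-line sketch: invoke the level-by-level Friedman--Martin theorem as a black box and transfer it to games on $\kappa$ via the collapse-and-copy argument of Theorem~\ref{exercise}.  There is, however, a genuine gap in your choice of collapse.  You force with $\Coll(\omega,\delta^*)$ where $\delta^*=|V_{\kappa+\alpha}^{L_\beta}|^{L_\beta}$, yet assert the resulting game is played on $\kappa^{<\omega}\times\omega$.  These are incompatible: conditions in $\Coll(\omega,\delta^*)$ are finite sequences from $\delta^*$, and for $\alpha\geq 1$ one has $\delta^*>\kappa$, so the game you actually describe lives on $\delta^*$, not on $\kappa$.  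The theorem explicitly requires failure of determinacy for games on $\kappa$---and this is essential for the $\MK$ application, where moves in a class game must be \emph{sets}, i.e.\ elements of $V_\kappa^M$.  Failure of $\Sigma^0_{1+\alpha+3}$-determinacy on $\delta^*$ does not entail failure on $\kappa$.

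The fix is exactly what the paper indicates: collapse $\kappa$ itself, just as in Theorem~\ref{exercise}.  The point you correctly flagged as delicate then becomes verifying that $L_\beta[z_G]$ is still the minimal transitive model of $\ZF^{-}+V{=}L[z_G]+$``$V_{\omega+\alpha}$ exists'', even though $z_G$ only codes $\kappa$ and not $V_{\kappa+\alpha}$.  This holds because $\Coll(\omega,\kappa)$ preserves cardinals above $\kappa$: in $L_\beta[z_G]$ one has $\omega_j=(\kappa^{+j})^{L_\beta}$ for $j\leq\alpha$, and a nice-names count using $\GCH$ in $L_\beta$ shows $V_{\omega+\alpha}$ is a set there.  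Conversely, if $\kappa<\gamma<\beta$ and $L_\gamma[z_G]\models\ZF^{-}+$``$V_{\omega+\alpha}$ exists'', then $L_\gamma\models\ZF^{-}$ as in the $\alpha=0$ case, and the existence of $\omega_\alpha$ in $L_\gamma[z_G]$ forces $(\kappa^{+\alpha})^{L_\gamma}<\gamma$; by acceptability this yields $V_{\kappa+\alpha}^{L_\gamma}\in L_\gamma$, contradicting the minimality of $\beta$.  So coding $\kappa$ alone suffices, and keeps the game on $\kappa$ as the statement demands.
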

  Regarding $(V_\kappa^M, {\in},  \P(V_\kappa^{M})\cap M)$ as an $\MK$-model, we obtain the expected hierarchy of strength: 
  
\begin{Theorem}  Over $\MK$, $\Sigma^0_\alpha$ determinacy for class games does not imply $\Sigma^0_\beta$ determinacy for class games, whenever $\alpha < \beta$.
\end{Theorem}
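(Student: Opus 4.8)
The plan is to mimic the separation argument of Theorem~\ref{thm:classFriedman}, but now relativized to an inaccessible $\kappa$ carrying a prescribed number of power set iterates, and to feed in Friedman's level-by-level refinement of Theorem~\ref{thm:Friedman} in place of its $\Sigma^0_4$ version. First I would record the relevant strengthening of Theorem~\ref{thm:Friedman}: for a real $z$, if $\beta$ is least so that $L_\beta[z] \models \ZF^- + V{=}L[z]$ \emph{together with} ``$V_{\om+\alpha}$ exists'' (equivalently, least so that $L_\beta[z]$ has a largest power-set iterate of rank $\om+\alpha$), then there is a $\Sigma^0_{1+\alpha+3}(z)$ game with no winning strategy in $L_\beta[z]$; this is Friedman's hierarchy result as presented in \cite{Fr} (or Martin \cite{MaBook}), and we take it as a black box exactly as we did Theorem~\ref{thm:Friedman}. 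Starting in $\ZFC$ with an inaccessible cardinal $\lambda$ such that $V_{\lambda+\alpha}$ exists, one lets $\kappa$ be least such that some $\beta > \kappa$ has $L_\beta \models \ZF^- + $``$\kappa$ is inaccessible and $V_{\kappa+\alpha}$ exists''; $L_\beta$ is then the minimal such model, and this gives the first displayed theorem after collapsing $\kappa$ to be countable in a further extension (or simply noting $L_\beta$ is already countable by Condensation applied to a hull). The key point, as in the proof of Theorem~\ref{exercise}, is that forcing with $\Coll(\om,\kappa)$ over $L_\beta$ produces the minimal transitive model of $\ZF^- + $``$V_{\om+\alpha}$ exists'' containing the collapse real $z_G$, so Friedman's relativized theorem applies inside $L_\beta[G]$ and yields a $\Sigma^0_{1+\alpha+3}(z_G)$ game undetermined there.

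Second, I would carry out the pullback of the undetermined game on $\om$ to an undetermined game on $\kappa^{<\om}$, verbatim as in the proof of Theorem~\ref{exercise}: moves are pairs $\la p, x\ra$ with $p \in \Coll(\om,\kappa)$ strictly decreasing and $x \in \om$; Player I wins if the resulting generic-coded prewellorder $z_h$ and real $x$ satisfy the arithmetic matrix defining the $\Sigma^0_{1+\alpha+3}$ payoff. A winning strategy for either player in $L_\beta$ would, via the density Claim~\ref{claim:GenExt} (whose statement and one-line proof are unchanged), convert into a winning strategy in $L_\beta[G]$ for the game with the Friedman payoff, contradicting undeterminacy there. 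The payoff being $\Sigma^0_{1+\alpha+3}$ on $\kappa^\om$ is immediate from the form of the matrix, once one writes the Boolean combination explicitly as in the $T_{i,j,k}$ construction at the end of Section~4 (now with one more alternation of union/intersection for each unit of $\alpha$ beyond the successor stages handled by the finite quantifier block). Then $M := (V_\kappa^{L_\beta}, {\in}, \P(V_\kappa^{L_\beta}) \cap L_\beta)$ is an $\MK$-model---here the second sort satisfies full Class Comprehension because $L_\beta \models \ZF^-$, exactly as in Theorem~\ref{thm:classFriedman}---in which a definable $\Sigma^0_{1+\alpha+3}$ class game is undetermined; this establishes the first stated theorem.

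For the final theorem (the non-implication over $\MK$), I would argue as follows. Given $\alpha < \beta < \om_1$, work over $\MK$ augmented with enough large-cardinal strength and apply the first theorem to obtain, \emph{provably in that ambient theory}, a transitive model $M$ of $\ZF^-$ with an inaccessible $\kappa$, $V_{\kappa+\alpha}$ existing, and $\Sigma^0_{1+\alpha+3}$ determinacy for class games failing in the associated $\MK$-model $(V_\kappa^M,\in,\P(V_\kappa^M)\cap M)$. Now, by relativized Borel determinacy (Martin's theorem in the $\ZF^-$ context, which goes through in any model of $\ZF^-$ with sufficiently many power-set iterates), that same $\MK$-model satisfies $\Sigma^0_\gamma$ determinacy for class games for every $\gamma$ strictly below the level $1+\alpha+3$; in particular it satisfies $\Sigma^0_{\alpha'}$ determinacy for all $\alpha' \leq \alpha$, since $\alpha < 1+\alpha+3$. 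Choosing the matching parameters so that $\alpha' := \alpha$ and $\beta' := 1+\alpha+3 > \alpha$ witnesses the gap (and iterating/composing to cover arbitrary $\beta > \alpha$ by monotonicity of the Borel hierarchy in determinacy strength), we get an $\MK$-model of $\Sigma^0_\alpha$-DET $+\ \neg(\Sigma^0_\beta$-DET$)$ for class games; since $\MK + \neg(\Sigma^0_\beta$-DET$)$ has a model, $\MK + \Sigma^0_\alpha$-DET does not prove $\Sigma^0_\beta$-DET. The main obstacle is bookkeeping the exact Borel rank: one must verify that the pulled-back payoff on $\kappa^\om$ lands at the level Friedman's relativized theorem delivers (rather than one step higher or lower), and that the complement---i.e. that \emph{lower} levels of determinacy genuinely \emph{do} hold in the $\MK$-model---follows from Martin's $\ZF^-$ Borel determinacy applied inside $M$, whose rank-requirement for $\Sigma^0_\gamma$-DET must be checked to be met by the $\kappa+\alpha$-many iterates of the power set available in $M$.
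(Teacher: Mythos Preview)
Your proposal is correct and follows the same approach as the paper: the paper itself offers essentially no argument beyond the one-line remark that regarding $(V_\kappa^M,{\in},\P(V_\kappa^M)\cap M)$ as an $\MK$-model ``we obtain the expected hierarchy of strength,'' relying on the preceding (also only sketched) theorem about Friedman's level-by-level analysis applied after collapsing an inaccessible. You have supplied considerably more detail than the paper does---in particular, you correctly flag that the positive direction (that the lower $\Sigma^0_\alpha$-determinacy actually holds in the model) requires invoking Martin's Borel determinacy inside $M$ with the available $V_{\kappa+\alpha}$-many iterates, a point the paper leaves entirely implicit.
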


Sharper results are possible.  In the case of $\Sigma^0_{1+\alpha+3}$ games on $\omega$, the author has obtained equivalences between determinacy principles and the existence of countable wellfounded models of fragments of set theory; this was the setting in which the relevance of the models $L_\theta$ was first made apparent.  For example,

\begin{Theorem}[\cite{Ha}]
  In $\PCA$, $\Sigma^0_4$ determinacy (for games on $\om$) is equivalent to the existence of a $\theta$ so that $L_\theta \models ``\omega_1$ exists, and all wellfounded trees on $\omega_1$ are ranked.''
\end{Theorem}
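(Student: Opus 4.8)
The plan is to prove the equivalence as two implications, each carried out over $\PCA$, using the slogan that ``$\Sigma^0_4 = \Sigma^0_{1+0+3}$-determinacy'' sits exactly at the level of ``all wellfounded trees on $\om_1$ are ranked''. Throughout, let $\psi(u)$ be the $\Pi_1$ formula ``every set in $L_u$ is countable in $L_u$'', so that a level $L_\theta$ modeling ``$\om_1$ exists, and all wellfounded trees on $\om_1$ are ranked'' is precisely a witness for Definition~\ref{def:theta} for this $\psi$: its largest cardinal $\kappa$ is regular, uncountable, and equal to $\om_1^{L_\theta}$, and every wellfounded tree on $\kappa$ is ranked. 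The least such $\theta$ is $\theta^{\psi}$, so the existence of \emph{some} such $\theta$ is the same assertion as the existence of $\theta^{\psi}$. For notational simplicity I will prove the lightface case; the boldface statement --- that $\sigdet$ for games on $\om$ is equivalent to $(\forall z)(\exists\theta)\, L_\theta[z] \models \cdots$ --- follows by relativizing every step below to a real parameter, which $\PCA$ is strong enough to do.

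\textbf{Sufficiency ($\Leftarrow$).} Suppose $\theta$ is as in the statement; I would prove $\sigdet$. Given a $\Sigma^0_4$ game on $\om$ with payoff $A$, apply Martin's unraveling for $\Sigma^0_4$ sets to obtain a covering $\tilde T$ of $\om^{<\om}$ along which $A$ pulls back to a clopen set; the auxiliary moves of $\tilde T$ at the top level code trees whose nodes are subsets of $\om$, and inside $L_\theta$ --- where every set has size at most $\kappa$ --- these are faithfully coded by trees on $\kappa = \om_1^{L_\theta}$. Crucially the construction bottoms out at trees on $\kappa$ and no higher, so by the closure of $L_\theta$ under $\Sigma_0$-Comprehension and ordinal arithmetic (Lemma~\ref{lem:thetaordclosure}) the covering $\tilde T$ and the associated auxiliary clopen game live inside $L_\theta$. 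By Theorem~\ref{thm:ClopdetinTheta}, $L_\theta \models \clopdetk$, so $L_\theta$ contains a winning strategy $\sigma$ for the auxiliary clopen game; and since $L_\theta$ ranks its wellfounded trees, the tree of ``undecided'' $\sigma$-positions is genuinely wellfounded in $V$, so $\sigma$ is genuinely winning, exactly as in the absoluteness argument opening the proof of Theorem~\ref{thm:NoOpen}. Projecting $\sigma$ back through the covering yields a winning strategy for $A$. Hence every $\Sigma^0_4$ game on $\om$ is determined.

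\textbf{Necessity ($\Rightarrow$).} Assume $\sigdet$; I would produce a $\theta$ as in the statement. Consider the $\Sigma^0_4$ game $\mathcal G$ on $\om$ --- definable without parameters --- that marries the direct-limit game $T$ of Theorem~\ref{thm:NoOpen} with the Friedman--Martin collapse-real device of the proof of Theorem~\ref{exercise}: across their $\om$ moves the two players build up a collapse real together with a tower of countable levels $L_{\alpha_0}, L_{\alpha_1}, \dots$ linked by $\Sigma_0$-preserving embeddings $\pi_n$ as in $T$, each $L_{\alpha_n}$ having a largest cardinal $\kappa_n$ with $\psi(\kappa_n)$ and absorbing the sets enumerated by the collapse real below the $\kappa_n$; the clause ``the trees produced are ranked'' is imposed not as a standing rule but as a $\Sigma^0_4$ winning condition of the $\exists i\,\forall j\,\exists k\,\forall n$ shape, \`a la Theorem~\ref{exercise} and Theorem~\ref{thm:Friedman}. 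By $\sigdet$, $\mathcal G$ is determined. One then checks --- this is the delicate part --- that a winning strategy for \emph{either} player decodes into such a $\theta$: run the strategy against an opponent whose moves enumerate all reals relevant to the $L$-construction, pass to the direct limit $N$ of the resulting tower $\la L_{\alpha_n}, \pi_n \ra$, and argue, exactly along the lines of Claims~\ref{claim:critinf}--\ref{claim:wfosmall} (with the cofinal map $F$ of Theorem~\ref{thm:noComp} establishing the wellfoundedness of $N$, as in Claim~\ref{claim:Nwelf}), that $N = L_\theta$ for a $\theta$ with the required properties --- the $\Sigma^0_4$ certification being precisely what forces the limit to be genuinely wellfounded. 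Hence such a $\theta$ exists.

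\textbf{The main obstacle.} The hard part is the construction and analysis of $\mathcal G$ in the necessity direction. In Theorem~\ref{thm:NoOpen} it was enough for the game to be open, because the ``ranking'' clause could be enforced as a standing rule on a proper-class tree; here the moves are natural numbers, only finite information is available per round, and ``the direct limit is wellfounded'' is $\Pi^1_1$, not arithmetical --- so the ranking requirement must be re-expressed as an asymptotic winning condition of precisely the $\Sigma^0_4$ shape, strong enough that a winning strategy for either player really forces a genuine wellfounded direct limit (so the decoding goes through), yet not so strong as to be unplayable. Threading this needle is where Friedman and Martin's analysis of how ``all wellfounded trees on $\om_1$ are ranked'' unfolds, via collapse reals, into a $\Sigma^0_4$ condition has to be interleaved with the Condensation and direct-limit machinery of Section~\ref{sec:ClopSep}. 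The sufficiency direction is, by comparison, Martin's unraveling run inside $L_\theta$, with Lemma~\ref{lem:thetaordclosure} and Theorem~\ref{thm:ClopdetinTheta} supplying the closure and clopen determinacy needed, and the tree-ranking hypothesis supplying the absoluteness of the strategy it produces.
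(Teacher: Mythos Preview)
The paper does not contain a proof of this theorem. It is stated with attribution to \cite{Ha} and is offered only as an illustrative example of the ``sharper results'' alluded to in the final section; no argument for either direction appears in the present paper. There is therefore nothing here to compare your proposal against.

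That said, a brief remark on your sketch. Your overall architecture---Martin's unraveling run inside $L_\theta$ for the sufficiency direction, and a Friedman--Martin style $\Sigma^0_4$ game whose winning strategies encode the existence of $\theta$ for the necessity direction---is the expected shape of such an argument and is consistent with what the author signals by citing \cite{Ha} alongside the collapse-real machinery of Theorem~\ref{exercise}. But your necessity direction is still at the level of a plan rather than a proof: the assertion that ``a winning strategy for \emph{either} player decodes into such a $\theta$'' is exactly the point that requires work, and your proposal acknowledges this (``this is the delicate part'') without supplying it. In particular, you have not specified the game $\mathcal G$ precisely enough to verify that its payoff is genuinely $\Sigma^0_4$, nor have you argued why Player I cannot simply win by sabotaging the tower (so that the direct limit is illfounded) while still meeting whatever $\Sigma^0_4$ condition you impose. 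The actual argument in \cite{Ha} handles these points, and you would need to consult that paper to fill them in; the present paper does not.
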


The results of this section suggest that these proofs may be uniformly translatable from the setting of second order arithmetic to that of Morse-Kelley set theory; once the theory of ``meta-$L$'' has been developed for proper class wellorders of ordertype $\Gamma > \ON$ so that $L_\Gamma$ makes sense, it is only necessary to check that the arguments in the style of Friedman and Martin in \cite{Ha} go through.  We believe the following (appropriately formalized) is provable in $\MK$:

\begin{Conjecture}
The determinacy of all Borel proper class games is equivalent to the existence, for each class $A$ and countable ordinal $\alpha$, of a class coding a model $L_\Theta[A] \models \ZF^{-} + ``V_{\ON + \alpha}$ exists.''
\end{Conjecture}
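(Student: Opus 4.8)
The plan is to establish both implications over $\MK$, after first carrying out the development of the theory of ``meta-$L$'' structures $L_\Theta[A]$ announced as forthcoming in Remark~\ref{rem:Adm}: here $A$ ranges over classes, $\Theta$ over ordertypes of proper class wellorders of $\ON$, and a class is permitted to ``code'' such a (proper-class-sized) structure in the two-sorted manner of \S\ref{sec:Frags}. What one needs from this development is exactly the fine-structural toolkit of \S\ref{sec:ClopSep}, namely Condensation and acceptability for the $L_\Theta[A]$'s, so that the arguments of Theorems~\ref{thm:noComp}, \ref{thm:ClopdetinTheta} and \ref{thm:NoOpen} --- being pure $\ZFC$-level Condensation-and-projectum arguments --- transfer verbatim one type up. Throughout, ``$V_{\ON+\alpha}$ exists'' inside such a model means that the $(\ON+\alpha)$-th stage of the model's cumulative hierarchy is a set, $\ON$ naming the ambient $\MK$-universe's ordinals as internalized in $L_\Theta[A]$; since $\ZF^-$ does not prove that the power set of $\omega$ exists, this is a genuine strengthening, measuring how many power sets the meta-model iterates past $V$.

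For the implication from model-existence to Borel class determinacy, the strategy is to lift Martin's proof of Borel determinacy --- iterated unraveling --- to the proper-class setting. Given a Borel class game on a class tree $T$ with payoff $\Sigma^0_{1+\alpha+3}$, where $T$ and the payoff are coded by a class $A$, fix a class coding $L_\Theta[A] \models \ZF^-$ plus ``$V_{\ON+\alpha}$ exists''. Working inside that model one carries out $\alpha$-many rounds of Martin's covering construction: each round replaces the current payoff by a successively simpler covering on an auxiliary class tree whose moves require one further application of the power-set operation, with covering maps that are continuous and preserve strategies; the hypothesis ``$V_{\ON+\alpha}$ exists'' is precisely what permits all $\alpha$ of these auxiliary move-spaces to be formed inside the model, limit stages being handled as in Martin's argument. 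After the $\alpha$ rounds one has, inside $L_\Theta[A]$, a \emph{clopen} class game; clopen class determinacy is not available in $\MK$ itself, but since $L_\Theta[A] \models \ZF^-$ its wellfounded class relations are ranked, so the clopen game is determined \emph{there} --- by \cite{GitHam}, or by rerunning the elementary argument of Theorem~\ref{thm:ClopdetinTheta}. Composing the covering maps then pulls the winning strategy back to a winning strategy for the original game, which is a class of $V$ because it is definable from the coding class.

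For the converse, the plan is to run the lower bound of Friedman and Martin --- the argument underlying Theorem~\ref{thm:Friedman}, already executed for class games in the proof of Theorem~\ref{thm:classFriedman} --- one type up and uniformly in $A$ and $\alpha$. Assuming all Borel class games are determined, suppose for contradiction that for some class $A$ and some $\alpha < \omega_1$ no class codes a model $L_\Theta[A] \models \ZF^-$ plus ``$V_{\ON+\alpha}$ exists''. Let $\Theta$ be least so that $L_\Theta[A]$ satisfies the corresponding \emph{weak} fragment in the sense of Definition~\ref{def:theta} --- largest cardinal $\ON$, with $V_{\ON+\alpha}$ present and all wellfounded trees on $V_{\ON+\alpha}$ ranked; by \S\ref{sec:ClopSep}, $\Theta$ is the ordertype of a class wellorder of $\ON$ and the $\Sigma_1$-theory of $L_\Theta[A]$ is coded by no class. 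As in the proof of Theorem~\ref{thm:classFriedman} --- collapsing the $V_{\ON+\alpha}$-sized move-space down to $\ON$ and threading through it a Gitman--Hamkins-style game in which Closed must build cohering approximations to $L_\Theta[A]$ --- one writes down a $\Sigma^0_{1+\alpha+3}$ class game in which a winning strategy for Closed would be a class computing the $\Sigma_1$-theory of $L_\Theta[A]$, hence impossible, while the direct-limit argument of \S\ref{sec:ClopSep} shows no strategy can be winning for Open. Determinacy of that game thus forces a genuine, wellfounded, class-coded $L_\Theta[A]$; a final reflection argument inside the direct limit --- the meta-analogue of the step ``$\ON$ inaccessible and all trees ranked, hence a model of $\ZF^-$'' implicit in \S\ref{sec:Frags} --- upgrades the weak fragment to $\ZF^-$ plus ``$V_{\ON+\alpha}$ exists'', the desired contradiction.

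The principal obstacle is the infrastructure these steps presuppose rather than any one of them in isolation. First, one must genuinely develop meta-$L$ and its Jensen-style fine structure at the proper-class level --- Condensation, acceptability, the behaviour of $\Sigma_1$-projecta --- in a form robust enough that the \S\ref{sec:ClopSep} proofs may be quoted rather than reproved; this is where the new technical work lies. Second, in the unraveling direction one must verify the \emph{exact} accounting: that $\alpha$ rounds of covering applied to $\Sigma^0_{1+\alpha+3}$ class games consume exactly $\alpha$ additional power sets, so that ``$V_{\ON+\alpha}$ exists'' is tight and not merely sufficient, and one must check that every auxiliary tree, strategy and covering map produced en route remains inside the class-coded meta-model rather than escaping into the ambient $\MK$-universe. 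With these in place, the level-by-level sharpenings --- matching Borel ranks to meta-$L$ heights, in the style of the $\Sigma^0_4$ result quoted above --- ought to follow by the same bookkeeping, though they are not needed for the stated equivalence.
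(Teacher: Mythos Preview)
The paper does not prove this statement: it is explicitly labeled a \emph{Conjecture}, introduced by ``We believe the following (appropriately formalized) is provable in $\MK$,'' and no proof is offered. The surrounding discussion only suggests a route --- develop the theory of meta-$L$ over $\MK$, then check that the Friedman--Martin arguments of \cite{Ha} translate --- without carrying it out. So there is no proof in the paper to compare your proposal against.

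Your outline is consonant with that suggested route: you propose lifting Martin's unraveling for the forward direction and the Friedman--Martin lower bound for the converse, after building the requisite meta-$L$ fine structure. That is the natural plan, and you are right to flag the infrastructure (Condensation, acceptability, $\Sigma_1$-projecta for $L_\Theta[A]$) as the real work. But note that what you have written is a programme, not a proof: the steps ``transfer verbatim one type up,'' ``rerunning the elementary argument,'' and the ``final reflection argument \dots\ upgrades the weak fragment to $\ZF^-$'' are each substantial and are precisely the content the paper leaves open. In particular, the accounting in the forward direction deserves care --- Martin's unraveling reduces $\Sigma^0_{1+\alpha+3}$ to an \emph{open} (not clopen) game after $\alpha$ rounds, so you should invoke Gale--Stewart inside $L_\Theta[A]$ rather than the clopen argument of Theorem~\ref{thm:ClopdetinTheta}; and in the converse, the passage from ``all wellfounded trees ranked'' to full $\ZF^-$ is not automatic and is exactly where the analogy with \cite{Ha} must be made precise.
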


\bibliographystyle{asl}
\bibliography{data1}
\end{document}